\newcommand{\nodes}{{\mathcal A}}
\newcommand{\vphi}{\varphi}
\newtheorem{pro}{Problem}
\numberwithin{equation}{section}
\newtheorem{theorem}{Theorem}%[section]
\newtheorem{remark}{Remark}%[section]
\def\diver{\mathrm{div}\,}
\def\d{\,\mathrm{d}}
\def\R{\mathbb{R}}
\def\C{\hbox{\rlap{\kern.24em\raise.1ex\hbox
      {\vrule height1.3ex width.9pt}}C}}
\def\P{\hbox{\rlap{I}\kern.16em P}}
\def\Q{\hbox{\rlap{\kern.24em\raise.1ex\hbox
      {\vrule height1.3ex width.9pt}}Q}}
\def\M{\hbox{\rlap{I}\kern.16em\rlap{I}M}}
\def\Z{\hbox{\rlap{Z}\kern.20em Z}}
\def\({\begin{eqnarray}}
\def\){\end{eqnarray}}
\def\[{\begin{eqnarray*}}
\def\]{\end{eqnarray*}}
\def\part#1#2{\frac{\partial #1}{\partial #2}}
\def\grad{\nabla}
\def\pmb#1{\setbox0=\hbox{$#1$}
  \kern-.025em\copy0\kern-\wd0
  \kern-.05em\copy0\kern-\wd0
  \kern-.025em\raise.0433em\box0 }
\def\d{\,\mathrm{d}}
\def\R{\mathbb{R}}
\def\P{\mathbb{P}}
\def\Q{\mathbb{Q}}
\def\C{\mathbb{C}}
\newcommand{\dx}{\mathrm{d}x}
\title[Biological transportation structures with general entropy dissipation in 2D]{Self-regulated biological transportation structures with general entropy dissipation: \\ 2D case and leaf-shaped domain}
\begin{document}
\maketitle 

\centerline{
     {\large Clarissa Astuto}\footnote{Mathematical and Computer Sciences and Engineering Division,
         King Abdullah University of Science and Technology,
         Thuwal 23955-6900, Kingdom of Saudi Arabia;
         {\it clarissa.astuto@kaust.edu.sa}, and
         Department of Mathematics and Computer Sciences, University of Catania 95125, Italy;
         {\it clarissa.astuto@unict.dmi.it}} \qquad
     {\large Peter Markowich}\footnote{Mathematical and Computer Sciences and Engineering Division,
         King Abdullah University of Science and Technology,
         Thuwal 23955-6900, Kingdom of Saudi Arabia;
         {\it peter.markowich@kaust.edu.sa}, and
         Faculty of Mathematics, University of Vienna,
        Oskar-Morgenstern-Platz 1, 1090 Vienna;
         {\it peter.markowich@univie.ac.at}}\qquad
         {\large Simone Portaro}\footnote{Mathematical and Computer Sciences
            and Engineering Division,
         King Abdullah University of Science and Technology,
         Thuwal 23955-6900, Kingdom of Saudi Arabia;
         {\it simone.portaro@kaust.edu.sa}} \qquad {\large Giovanni Russo}\footnote{Department of Mathematics and Computer Sciences, University of Catania 95125, Italy;
         {\it russo@unict.dmi.it}}
     }

\section*{Abstract}
In recent years, the study of biological transportation networks has attracted significant interest, focusing on their self-regulating, demand-driven nature. This paper examines a mathematical model for these networks, featuring nonlinear elliptic equations for pressure and an auxiliary variable, and a reaction-diffusion parabolic equation for the conductivity tensor, introduced in \cite{portaro2022emergence}. The model, based on an energy functional with diffusive and metabolic terms, allows for various entropy generating functions, facilitating its application to different biological scenarios. We proved a local well-posedness result for the problem in Hölder spaces employing Schauder and semigroup theory. Then, after a suitable parameter reduction through scaling, we  computed the numerical solution for the proposed system using a recently developed ghost nodal finite element method \cite{astuto2024nodal}. An interesting aspect emerges when the solution is very articulated and the branches occupy a wide region of the domain.

\section{Introduction}
In recent years, there has been growing interest in understanding the principles and mechanisms that underpin the formation of biological transportation networks. Many biologists believe these networks are demand-driven by nature, developing without centralized control \cite{tero2010rules}, and finely tuned through cycles of evolutionary selection pressure. Thus, they can be seen as emergent structures arising from self-regulating processes. These structures, particularly the organization of leaf venation networks, vascular networks, and neural networks, have been a focal point for the biophysical and mathematical research communities, as evidenced by recent studies such as \cite{astuto2022comparison, astuto2023asymmetry, astuto2023finite, corson2010fluctuations, portaro2024measure, katifori2010damage}. Research in this area has significant implications, reaching beyond biology to fields like medicine, chemistry, and engineering.

In this work, we examine the elliptic-parabolic system introduced in \cite{portaro2022emergence}, which comprises two nonlinear elliptic equations for the pressure $p$ and the auxiliary variable $\sigma$, along with a reaction-diffusion parabolic equation for the conductivity tensor $\mathbb C$. This system seeks to describe the formation of biological network structures and is derived as the gradient flow of an energy functional (see \eqref{eq:energy}), incorporating a diffusive term, an activation term—interpreted as entropy dissipation—and a metabolic cost term.
The energy expression generalizes the one introduced in \cite{marko_perthame,marko_perthame_2}, where the convex entropy generating function $\Phi(p)$ was quadratic. The energy functional is constrained by a Poisson equation for pressure, which describes Darcy's law for slow flow in porous media, as discussed in \cite{darcy1856fontaines,neuman1977theoretical}. 
By specifying different forms of $\Phi(p)$, we can model various phenomena, such as Joule's heating and Fisher information \cite{portaro2022emergence}. This flexibility is crucial, as comparing known networks with newly discovered ones is a natural approach to studying biological networks. The introduction of $\Phi(p)$ is pivotal because it provides greater freedom in creating patterns that can be matched with empirical biological data. In future research, our model's outcomes can be rigorously compared with real biological data, enabling a more comprehensive analysis.

A one-dimensional analysis of the proposed model was conducted in \cite{astuto2023self}, which demonstrated local-in-time existence for smooth solutions and provided numerical evidence that $\C$ touches zero in finite time. In this paper, we focus on the multi-dimensional case, where the analysis is more challenging.

Furthermore, our work is dedicated to the mathematical modeling of biological transportation networks in porous media, specifically leaf venation in plants \cite{dengler2001vascular, malinowski2013understanding}. To this end, we define our equations in a leaf-shaped domain $\Omega$, contained within a rectangular region discretized by a regular square mesh. The mesh is intersected arbitrarily by $\Omega$. We propose a nodal ghost finite-element method, for a detailed explanation of the numerical scheme refer to \cite{astuto2024nodal}. The active mesh $\Omega_h$ is the subset of the tessellation that intersects $\Omega$. The finite element space on $\Omega_h$ is a restriction of the finite element space on the background mesh. The finite element method is based on a variational formulation over $\Omega$, with stabilization terms to handle instability from cut elements whose size is proportional to a power of the cell length. In \cite{astuto2024nodal,astuto2024comparison}, the authors introduce a "snapping back to grid" technique that aligns the snapping threshold with the penalization term, ensuring optimal convergence.

The paper is structured as follows: Section \ref{sec:model} introduces the mathematical model; Section \ref{sec:existence} establishes the existence of solutions to the system (\ref{eq:poisson}--\ref{eq:sigma_law}) using Hölder theory. Sections \ref{sec:variational_formulation} through \ref{sec:time_discretization} detail the numerical scheme, with special emphasis on space discretization in Section \ref{sec:space_discretization}. Section \ref{sec:numerical_results} presents various numerical tests in different geometries and with varying convex entropy generators. Finally, we conclude with some insights and future directions.

%%%%%%%%%%%%%%%%%%%%%%%%%%%%%%%%%%%%%%%%%%%%%%%%%%%%%%%%%%%%%%%%%%%%%%%%%%%%%%%%%%%%%%%%%%%
\section{Model}
\label{sec:model}
The recent study \cite{portaro2022emergence} introduced a novel category of self-regulatory processes. These processes are defined through the minimization of an entropy dissipation, expressed by the energy functional
\begin{align}
    \label{eq:energy}
    E[\mathbb{C}] = \int_{\Omega} \left(D^2 \frac{||\grad \mathbb{C}||^2}{2} + c^2 \Phi''(p) \nabla p \cdot ( \mathbb{C} + r \mathbb{I} ) \nabla p + \frac{\nu}{\gamma} ||\C||^{\gamma} \right) \mathrm{d}x.
\end{align}
This energy functional, denoted as $E=E[\mathbb{C}]$, undergoes minimization over all the $d$-dimensional symmetric and positive semidefinite conductivity tensor fields $\mathbb{C} = \mathbb{C}(x) \in \mathscr{S}^d(\R)$.
The diffusion coefficient $D^2$ controls the random effects in the transportation medium, while the activation parameter $c^2$ describes the tendency of the network to align with the pressure gradient.
The function $\Phi: \mathbb{R} \rightarrow \mathbb{R}$ represents a convex entropy generator, whereas $p = p(x)$ describes the scalar material pressure of the porous medium where the network arises. 
On the other hand, $r : \Omega \rightarrow \R^+$ describes the isotropic background permeability of the medium.
The metabolic cost to sustain the network is given by $\frac{\nu}{\gamma}|| \mathbb{C} ||^{\gamma}$ where $\nu>0$ is a metabolic coefficient, $\gamma > 0$ is the metabolic exponent and $|| \cdot ||$ denotes the Frobenius norm. For leaf venation in plants, $1/2\leq \gamma \leq 1$, as shown in  \cite{ hu2013optimization,hu2013adaptation}. {For this range of values, there is no proof of convergence to a unique steady state, while such a proof exists for $\gamma>1$ in \cite{marko_perthame_2}. Both cases have been numerically investigated in \cite{astuto2022comparison,astuto2023asymmetry}.} The domain $\Omega \subset \mathbb{R}^d$ (with $d \ge 1$) is assumed to be bounded, open and with $C^1$ boundary $\Gamma := \partial \Omega$.

The above energy functional is constrained by the mass conservation law
\begin{align}
    \label{eq:poisson}
    -\diver ( (\mathbb{C} + r \mathbb{I} ) \nabla p) = S \quad \text{in} \; \Omega,
\end{align}
where $S = S(x)$ denotes the given distribution of sources and sinks within the system, assumed to be constant over time. We impose that the material flux,
described by the expression ${(\mathbb{C}+r\mathbb{I} )\nabla p}$, is zero on $\Gamma$, i.e., 
\begin{equation}
(\mathbb{C} + r \mathbb{I} ) \nabla p \cdot \widehat n = 0 \quad \text{on} \; \Gamma. \notag
\end{equation}
Here, $\widehat n$ denotes the outer unit normal vector to $\Gamma$. Compatibility with such a choice requires
\begin{align*}
    \int_{\Omega} S(x) \mathrm{d}x = 0.
\end{align*}

The formal $L^2$-gradient flow for the energy functional \eqref{eq:energy}, constrained by the mass conservation law \eqref{eq:poisson}, reads
\begin{equation}
    \label{eq:gradient_flow}
    \frac{\partial \mathbb{C}}{\partial t} = D^2 \Delta \mathbb{C} + c^2 \left( \Phi''(p) \nabla p \otimes \nabla p + \frac{\nabla\sigma\otimes\nabla p + \nabla p\otimes\nabla\sigma}{2} \right) - \nu || \C ||^{\gamma-2} \C \quad \text{in} \; (0, \infty) \times \Omega,
\end{equation}
as detailed in Lemma 1 on \cite{portaro2022emergence}. The variable $t \geq 0$ serves as a temporal parameter for the gradient flow, and $\sigma = \sigma(t,x)$ solves the boundary value problem
\begin{equation}
    \label{eq:sigma_law}
    -\diver ( (\C + r \mathbb{I}) \nabla \sigma) = \Phi'''(p) \nabla p \cdot (\mathbb{\C} + r \mathbb{I}) \nabla p \quad \text{in} \, (0, \infty) \times \Omega,
\end{equation}
with the no-flux boundary condition $(\C + r \mathbb{I}) \nabla \sigma \cdot \widehat n = 0$ along $\Gamma$.
We choose homogeneous Neumann boundary conditions for all entries of $\mathbb{C}$ and the same type of conditions was selected for the numerical simulations carried out in \cite{hu2019optimization}. 
At the end, the system is closed by taking a positive definite initial condition $\C_0 > 0$ for the conductivity, i.e.,
\begin{align}
    \label{eq:ic}
    \C (t=0, x) = \C_{0} (x) \quad \mathrm{in} \; \Omega.
\end{align}

\begin{remark}
    We point out that in Lemma 1 of \cite{portaro2022emergence}, the boundary conditions for $p$ and $\sigma$ were specified as homogeneous Dirichlet, alongside a particular equilibrium condition for $\Phi(p)$ on the boundary, i.e., $\Phi'(p) = 0$ on $\Gamma$. However, transitioning to no-flux boundary conditions does not alter the gradient flow structure, thereby obviating the need for the equilibrium condition assumption for $\Phi(p)$.
\end{remark}

%%%%%%%%%%%%%%%%%%%%%%%%%%%%%%%%%%%%%%%%%%%%%%%%%%%%%%%%%%%%%%%%%%%%%%%%%%%%%%%%%%%%%%%%%%%
\section{Existence of solutions of the multi-dimensional problem in H\"older space}
\label{sec:existence}
The goal of this section is to show existence of a solution of the $d$-dimensional system (\ref{eq:poisson}--\ref{eq:sigma_law}) with initial condition \eqref{eq:ic}. For the sake of mathematical transparency, we restrict our analysis to the case of $r=0$, as regularization is not necessary for the subsequent theory. However, we mention that the following results can easily be carried over to the physical relevant case of positive background permeability $r$.
We shall work in the space of H\"older-continuous functions.

\begin{theorem} \label{thm:existence_uniqueness}
    Let $S \in C^{0,\alpha}(\Bar{\Omega})$ for some $\alpha \in (0, 1)$, $D=0$ and $\Phi \in C^4(\R)$.
    Then, the problem (\ref{eq:poisson}--\ref{eq:sigma_law}) subject to initial condition \eqref{eq:ic} - with $\C_0(x) \ge \beta$ - admits a local in time solution $\mathbb{C} \in X_T$ for some $T>0$, where the set $X_T$ is defined as
    \begin{align*}
         X_T := \left\{  \mathbb{C} \in C\left( [0, T]; C^{1, \alpha} \left(\Bar{\Omega}; \mathscr{S}^d(\R) \right)\right): \;  \mathbb{C} \ge \frac{\beta}{2} > 0, \| \mathbb{C} \|_{C( [0, T]; C^{1, \alpha} (\Bar{\Omega}; \mathscr{S}^d(\R) )} \le R \, \mathrm{ for \; some } \, R > 0  \right\}.
     \end{align*}
 \end{theorem}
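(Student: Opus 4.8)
The plan is to reformulate the system as a fixed-point problem for $\mathbb{C}$ on the space $X_T$ and apply Banach's contraction principle (or Schauder's theorem, but contraction gives uniqueness too) for $T$ small enough. Since $D=0$, the parabolic equation \eqref{eq:gradient_flow} degenerates into an ODE in time at each spatial point, driven by terms built from $p$ and $\sigma$, which are themselves determined by $\mathbb{C}$ through the two elliptic problems \eqref{eq:poisson} and \eqref{eq:sigma_law}. So the natural solution map is $\Psi: \mathbb{C} \mapsto \widetilde{\mathbb{C}}$, where given $\mathbb{C} \in X_T$ one first solves the Neumann problem \eqref{eq:poisson} for $p = p[\mathbb{C}]$, then the Neumann problem \eqref{eq:sigma_law} for $\sigma = \sigma[\mathbb{C}, p]$, and finally integrates
\[
\widetilde{\mathbb{C}}(t,x) = \mathbb{C}_0(x) + \int_0^t \left[ c^2 \left( \Phi''(p) \nabla p \otimes \nabla p + \tfrac{\nabla\sigma \otimes \nabla p + \nabla p \otimes \nabla \sigma}{2} \right) - \nu \|\mathbb{C}\|^{\gamma-2} \mathbb{C} \right] \d s .
\]

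First I would set up the elliptic estimates. Because $\mathbb{C} \in C^{1,\alpha}$ and $\mathbb{C} \ge \beta/2 > 0$, the operator $-\diver((\mathbb{C}+r\mathbb{I})\nabla \cdot)$ is uniformly elliptic with $C^{0,\alpha}$ coefficients, so Schauder theory for the Neumann problem (together with the compatibility condition $\int_\Omega S = 0$ and a normalization like $\int_\Omega p = 0$ to fix the additive constant) gives a unique $p[\mathbb{C}] \in C^{2,\alpha}(\bar\Omega)$ with $\|p\|_{C^{2,\alpha}} \le K(R,\beta)\|S\|_{C^{0,\alpha}}$. Then the right-hand side of \eqref{eq:sigma_law}, namely $\Phi'''(p)\nabla p \cdot (\mathbb{C}+r\mathbb{I})\nabla p$, lies in $C^{0,\alpha}(\bar\Omega)$ (here $\Phi \in C^4$ is used so that $\Phi''' $ is $C^1$, hence composition with $p$ stays Hölder), and again has zero mean by the divergence theorem, so Schauder gives $\sigma[\mathbb{C}] \in C^{2,\alpha}(\bar\Omega)$ with an analogous bound. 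Next I would check that $\Psi$ maps $X_T$ into itself: the integrand above is bounded in $C^{1,\alpha}(\bar\Omega)$ uniformly over $t$ (using the algebra property of $C^{1,\alpha}$, the bounds on $p,\sigma$, and that $s \mapsto \|\mathbb{C}(s)\|^{\gamma-2}\mathbb{C}(s)$ is controlled since $\mathbb{C} \ge \beta/2$ keeps us away from the singularity of $x \mapsto \|x\|^{\gamma-2}x$ when $\gamma < 2$), so $\|\widetilde{\mathbb{C}} - \mathbb{C}_0\|_{C([0,T];C^{1,\alpha})} \le T \cdot M(R,\beta)$; choosing $R := 2\|\mathbb{C}_0\|_{C^{1,\alpha}}$ and then $T$ small makes $\|\widetilde{\mathbb{C}}\|_{C([0,T];C^{1,\alpha})} \le R$, and shrinking $T$ further ensures $\widetilde{\mathbb{C}} \ge \mathbb{C}_0 - T M \ge \beta/2$ pointwise (as symmetric matrices).

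The contraction step is where the real work lies: given $\mathbb{C}_1, \mathbb{C}_2 \in X_T$, I must show $\|\Psi(\mathbb{C}_1) - \Psi(\mathbb{C}_2)\|_{C([0,T];C^{1,\alpha})} \le \tfrac{1}{2}\|\mathbb{C}_1 - \mathbb{C}_2\|_{C([0,T];C^{1,\alpha})}$ for $T$ small. This requires Lipschitz dependence of the elliptic solves on the coefficient: writing the equations for $p_1 - p_2$ and $\sigma_1 - \sigma_2$, the difference solves a Neumann problem with the same operator but a right-hand side proportional to $(\mathbb{C}_1 - \mathbb{C}_2)$ acting on the (already-bounded) gradients of $p_i, \sigma_i$; Schauder estimates then give $\|p_1 - p_2\|_{C^{2,\alpha}}, \|\sigma_1 - \sigma_2\|_{C^{2,\alpha}} \le L(R,\beta)\|\mathbb{C}_1 - \mathbb{C}_2\|_{C^{1,\alpha}}$. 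Feeding this into the difference of the time-integrals, together with local Lipschitz continuity of $x \mapsto \|x\|^{\gamma-2}x$ on $\{\|x\| \ge \beta/2\}$ and of the quadratic forms in $\nabla p, \nabla \sigma$, yields $\|\Psi(\mathbb{C}_1) - \Psi(\mathbb{C}_2)\| \le T\, L'(R,\beta)\,\|\mathbb{C}_1 - \mathbb{C}_2\|$, and we pick $T < 1/(2L')$. The main obstacle I anticipate is bookkeeping the constants carefully through the two coupled elliptic solves — in particular verifying that the Schauder constant for $-\diver((\mathbb{C}+r\mathbb{I})\nabla\cdot)$ depends on $\mathbb{C}$ only through its $C^{0,\alpha}$ norm and its ellipticity constant $\beta/2$ (so it stays uniform over $X_T$), and handling the Neumann compatibility/normalization consistently so that the map is well-defined; the degeneracy of the metabolic term $\|\mathbb{C}\|^{\gamma-2}\mathbb{C}$ for $\gamma \in [1/2,1)$ is harmless precisely because the invariant set $X_T$ enforces $\mathbb{C} \ge \beta/2$. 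Finally, the fixed point $\mathbb{C} \in X_T$ together with the reconstructed $p, \sigma$ is the desired local-in-time solution, and uniqueness in $X_T$ follows from the contraction.
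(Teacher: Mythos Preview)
Your proposal is correct and follows essentially the same route as the paper: Banach fixed-point on $X_T$, Schauder estimates for the two elliptic solves to obtain $p,\sigma\in C^{2,\alpha}$ with bounds depending only on $\beta,R$, Lipschitz dependence of $p$ and $\sigma$ on $\mathbb{C}$ via the difference equations, and self-mapping plus contraction for $T$ small, with the metabolic singularity handled by the lower bound $\mathbb{C}\ge\beta/2$. The only point to be careful about is your claim that the right-hand side of the $\sigma$-equation ``has zero mean by the divergence theorem'' --- this is not immediate (one computes $\int_\Omega \Phi'''(p)\nabla p\cdot\mathbb{C}\nabla p = \int_\Omega \Phi''(p)S$ after integrating by parts), and the paper, like you, does not dwell on the Neumann compatibility issue; but this does not affect the overall strategy.
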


\begin{proof}
     We shall employ the Banach fixed point theorem.
     For technical simplicity of the proof, we shall assume that $\mathbb{C}$ is a scalar quantity; however, the below calculations also hold in the tensor case with obvious modifications. Moreover, let $c^2 = \nu = 1$, as clearly, nothing changes in the more general case.
    
     For $T>0$, to be specified later, we construct the operator $\mathcal{S} : X_T \rightarrow X_T$ that maps $\Bar{\mathbb{C}} \in X_T$ to $\mathcal{S}(\Bar{\mathbb{C}}) =: \mathbb{C}$, whose fixed points are solutions of (\ref{eq:poisson}--\ref{eq:sigma_law}). Our goal is to demonstrate that $\mathcal{S}$ is a contraction on $X_T$ and that $\mathrm{Im}(\mathcal{S}) \subset X_T$. Fix $\Bar{\mathbb{C}} \in X_T$, then from \eqref{eq:poisson} we have 
     \begin{align*}
         - \Bar{\mathbb{C}} \Delta p - \nabla \Bar{\mathbb{C}} \cdot \nabla p = S,
     \end{align*}
     which by the Schauder theory \cite{gilbarg1977} admits a solution $p(t) \in C^{2, \alpha}(\Bar{\Omega})$ and there exists a constant $K(d, \alpha, \beta, R) =: K_{\beta, R} > 0$ such that
     \begin{align}
         \label{eq:holder_p}
         \| p(t) \|_{2, \alpha} \le K_{\beta, R} \| S \|_{0, \alpha},
     \end{align}
     where $\| \cdot \|_{C^{k,\alpha}(\Bar{\Omega})} = \| \cdot \|_{k,\alpha}$. For the sake of simplicity, we denote possibly different constants which depend on $\beta, R$ by the same symbol $K_{\beta, R}$. Observe that $ \Phi'''(p(t)) \Bar{\mathbb{C}}(t) | \nabla p(t) |^2 \in C^{0, \alpha}( \Bar{\Omega})$ and there exists $\sigma (t) \in C^{2, \alpha}(\Bar{\Omega})$ satisfying
     \begin{align*}
         %\label{eq:holder_sigma}
         \| \sigma(t) \|_{2, \alpha} &\le K_{\beta, R} \| \Phi''' (p) \|_{0, \alpha} \| \nabla p (t) \|_{0, \alpha}^2 \notag
     \end{align*}
     where we used the Banach algebra structure of Hölder spaces. It is straightforward to show that $\| \Phi'''(p) \|_{0, \alpha} \le \| \Phi^\textrm{\romannumeral 4} \|_{L^{\infty}(\mathrm{Im}(p))} \| p \|_{0, \alpha}$ where $\| \Phi^\textrm{\romannumeral 4} \|_{L^{\infty}(\mathrm{Im}(p))}$ just depends on $\beta, R$ so it will be successively incorporated in the constant $K_{\beta, R}$. Therefore, using \eqref{eq:holder_p} we rewrite the Hölder estimate for $\sigma$
     \begin{align}
         \label{eq:holder_sigma}
         \| \sigma (t) \|_{2, \alpha} \le K_{\beta, R} \| S \|_{0,\alpha}^3.
     \end{align}
     In the course of the computations, we shall omit the temporal dependence when there is no risk of confusion. Next, let $\Bar{\mathbb{C}_1}, \Bar{\mathbb{C}_2} \in X_T$, $p_i := p[\Bar{\mathbb{C}_i}]$ and $\sigma_i := \sigma[\Bar{\mathbb{C}_i}], \, i = 1,2$ be the solutions of \eqref{eq:poisson}, \eqref{eq:sigma_law}, respectively. Then, integrating \eqref{eq:gradient_flow} on $t \in [0, T]$ and subtracting we get
    \begin{align}
        \label{eq:contraction_computation_1}
        \| \C_1 - \C_2 \|_{1, \alpha} &\le \bigg{\|} \int_0^t \bigg[ \left( \Phi''(p_1) | \nabla p_1 |^2 - \Phi''(p_2) | \nabla p_2 |^2  \right) + \left( \nabla p_1 \cdot \nabla \sigma_1 - \nabla p_2 \cdot \nabla \sigma_2 \right) \notag \\
        &\quad \quad \qquad + \left( |\C_1|^{\gamma-1} - |\C_2|^{\gamma-1} \right) \bigg] \, \d s \bigg{\|}_{1, \alpha} \notag \\
        &\le \int_0^t \left( J_1 (s) + J_2(s) + J_3(s) \right) \, \d s,
    \end{align}
     where
     \begin{align}
         J_1 (s) &:= \| \Phi''(p_1) | \nabla p_1 |^2 - \Phi''(p_2) | \nabla p_2 |^2 \|_{1, \alpha} \notag \\
         J_2(s) &:= \| \nabla p_1 \cdot \nabla \sigma_1 - \nabla p_2 \cdot \nabla \sigma_2 \|_{1, \alpha} \notag \\
         J_3(s) &:= \| |\C_1|^{\gamma-1} - |\C_2|^{\gamma-1} \|_{1, \alpha}. \notag
     \end{align}
     Firstly, adding and subtracting $\Phi''(p_2) | \nabla p_1 |^2$, using \eqref{eq:holder_p}, the local Lipschitz continuity of $\Phi''$ and the continuous embedding of $C^{2, \alpha}(\Bar{\Omega})$ into $C^{1, \alpha}(\Bar{\Omega})$, we estimate $J_1(s)$
    \begin{align}
     \label{eq:J1_estimate1}
         J_1 (s) &\le \big{\|} \left( \Phi''(p_1) - \Phi''(p_2) \right) | \nabla p_1 |^2 \big{\|}_{1, \alpha} + \big{\|} \Phi''(p_2) \left( | \nabla p_1 |^2 - | \nabla p_2 |^2 \right) \big{\|}_{1, \alpha} \notag \\
         &\le K_{\beta, R} \| S \|_{0, \alpha}^2 \| p_1 - p_2 \|_{1, \alpha} + \| \Phi''(p_2) \|_{1, \alpha} \| \nabla p_1 + \nabla p_2 \|_{1, \alpha} \| \nabla p_1 - \nabla p_2 \|_{1, \alpha} \notag \\
         &\le K_{\beta, R} \| S \|_{0, \alpha}^2 \| p_1 - p_2 \|_{2, \alpha}.
     \end{align}
     To estimate $p_1 - p_2$, we consider the elliptic equation \eqref{eq:poisson} for both $p_1, p_2$ and subtract the two, obtaining
     \begin{align*}
         - \diver \left( \Bar{\C_1} (\nabla p_1 - \nabla p_2) \right) = \diver \left( \left((\Bar{\C_1} - \Bar{\C_2}\right) \nabla p_2 \right) \in C\left([0,T]; C^{0, \alpha}(\Bar{\Omega})\right).
     \end{align*}
     Employing Schauder theory, estimate \eqref{eq:holder_p} and the continuous embedding of $C^{1, \alpha}(\Bar{\Omega})$ into $C^{0, \alpha}(\Bar{\Omega})$, we can write
     \begin{align}
         \label{eq:holder_p1-p2}
         \| p_1 - p_2 \|_{2, \alpha} \le K_{\beta, R} \| S \|_{0, \alpha} \| \Bar{\C_1} - \Bar{\C_2} \|_{1, \alpha},
     \end{align}
     which substituted into \eqref{eq:J1_estimate1} yields
     \begin{align}
         \label{eq:J1_estimate2}
         J_1(s) \le K_{\beta, R} \| S \|_{0, \alpha}^3 \| \Bar{\C_1} - \Bar{\C_2} \|_{1, \alpha} \quad \forall \; \Bar{\C_1}, \Bar{\C_2} \in X_T.
     \end{align}
     Adding and subtracting $\nabla p_1 \cdot \nabla \sigma_2$ and using \eqref{eq:holder_p}, \eqref{eq:holder_sigma}, \eqref{eq:holder_p1-p2} we obtain the following estimate for $J_2(s)$
     \begin{align}
         \label{eq:J2_estimate}
         J_2 (s) &\le \big{\|} \nabla p_1 \cdot \left( \nabla \sigma_1 - \nabla \sigma_2 \right) \big{\|}_{1, \alpha} + \big{\|} \nabla \sigma_2 \cdot \left( \nabla p_1 - \nabla p_2 \right) \big{\|}_{1, \alpha} \notag \\
         &\le K_{\beta, R} \| S \|_{0, \alpha} \left( \| \sigma_1 - \sigma_2 \|_{2, \alpha} + \| S \|_{0, \alpha}^3 \| \Bar{\C_1} - \Bar{\C_2} \|_{1, \alpha} \right).
     \end{align}
     An estimate of $\sigma_1 - \sigma_2$ is obtained by considering the elliptic equation \eqref{eq:sigma_law} for $\sigma_1, \sigma_2$ and subtracting the two equations to get
     \begin{align*}
         - \diver \left( \Bar{\C_1} \nabla \sigma_1 - \Bar{\C_2} \nabla \sigma_2 \right) = \Phi'''(p_1) \Bar{\C_1} | \nabla p_1 |^2 - \Phi'''(p_2) \Bar{\C_2} | \nabla p_2 |^2,
     \end{align*}
     which is recast, after some algebraic manipulation, into
     \begin{align*}
         - \diver \left( \Bar{\C_1} \left( \nabla \sigma_1 - \nabla \sigma_2 \right) \right) = &\diver \left( \left( \Bar{\C_2} - \Bar{\C_1} \right) \grad \sigma_2 \right) + \left( \Phi'''(p_1) - \Phi'''(p_2) \right) \Bar{\C_1} | \grad p_1 |^2 \\
         &+ \Phi'''(p_2) \big[ \left( \Bar{\C_1} - \Bar{\C_2} \right) | \grad p_1 |^2 + \Bar{\C_2} \left( | \grad p_1 | + | \grad p_2 | \right) \left( | \grad p_1 | - | \grad p_2 | \right)\big].
     \end{align*}
     The right-hand side of the latter belongs to $C\left([0,T]; C^{0, \alpha}(\Bar{\Omega})\right)$. Therefore, the Schauder theory gives the desired estimate
     \begin{align}
         \label{eq:holder_sigma1-2}
         \| \sigma_1 - \sigma_2 \|_{2, \alpha} \le K_{\beta, R} \| S \|_{0, \alpha}^3 \| \Bar{\C_1} - \Bar{\C_2} \|_{1, \alpha}.
     \end{align}
     Applying \eqref{eq:holder_sigma1-2} to \eqref{eq:J2_estimate} we get
     \begin{align}
        \label{eq:J2_estimate2}
        J_2 (s) \le K_{\beta, R} \| S \|_{0, \alpha}^4 \| \Bar{\C_1} - \Bar{\C_2} \|_{1, \alpha} \quad \forall \; \Bar{\C_1}, \Bar{\C_2} \in X_T.
     \end{align}
     Regarding $J_3(s)$, we observe that the derivative of the function $x \rightarrow |\Bar{\C}(x)|^{\gamma-1}$ is locally bounded away from the zero. Given that $\Bar{\C} \in X_T$ implies $\Bar{\C} \ge \frac{\beta}{2} > 0$, it is not feasible to consider a neighborhood around zero. This condition implies that the function is locally Lipschitz continuous, that is, there exists a constant $K_\beta > 0$ such that
     \begin{align}
         \label{eq:J3_estimate}
         J_3(s) \le K_\beta \| \Bar{\C_1} - \Bar{\C_2} \|_{1, \alpha} \quad \forall \; \Bar{\C_1}, \Bar{\C_2} \in X_T. 
     \end{align}
     Substitution of \eqref{eq:J1_estimate2}, \eqref{eq:J2_estimate2} and \eqref{eq:J3_estimate} into \eqref{eq:contraction_computation_1} yields
     \begin{align*}
         \| \C_1 - \C_2 \|_{1, \alpha} \le K_{\beta, R} \left( \| S \|_{0, \alpha}^4 + \| S \|_{0, \alpha}^3 + 1 \right) \int_0^t \| \Bar{\C_1} (s) - \Bar{\C_2} (s) \|_{1, \alpha} \d s.
     \end{align*}
     Subsequently, taking the maximum on $[0, T]$ of the latter expression, we get
     \begin{align*}
         \| \C_1 - \C_2 \|_{X_T} \le K_{\beta, R} \left( \| S \|_{0, \alpha}^4 + \| S \|_{0, \alpha}^3 + 1 \right) T \| \Bar{\C_1} - \Bar{\C_2} \|_{X_T},
     \end{align*}
     and choosing $T$ such that
     \begin{align*}
         T < \frac{1}{K_{\beta, R} \left( \| S \|_{0, \alpha}^4 + \| S \|_{0, \alpha}^3 + 1 \right)}
     \end{align*}
     we prove the contraction property for $\mathcal{S}$.
    
     To complete the proof, it remains to show that $\mathcal{S}(\Bar{\C}) =: \C \in X_T$ for every $\Bar{\C} \in X_T$.
     Integrating \eqref{eq:gradient_flow} on $[0, t]$ for some $t \in (0, T)$ and using \eqref{eq:holder_p}, \eqref{eq:holder_sigma} we obtain the following bound from below for $\C$
     \begin{align*}
         \C &\ge \beta - \int_0^t \big{|} \Phi''(p) | \grad p |^2 + \grad p \cdot \grad \sigma - |\C|^{\gamma-1} \big{|} \d s \\
         &\ge \beta - \int_0^t \| \Phi''(p) \|_{0, \alpha} \| \grad p \|^2_{0, \alpha} + \| \grad p \|_{0, \alpha} \| \grad \sigma \|_{0, \alpha} + \| |\C|^{\gamma-1} \|_{0, \alpha} \d s \\
         &\ge \beta - K_{\beta, R} \left( \| S \|_{0, \alpha}^4 + \| S \|_{0, \alpha}^3 + 1 \right) T,
     \end{align*}
     where for the last term we used the fact that $\| \C \|_{0,\alpha} \le R$. 
     Designing $T$ such that
     \begin{align*}
         T \le \frac{\beta}{2 K_{\beta, R} \left( \| S \|_{0, \alpha}^4 + \| S \|_{0, \alpha}^3 + 1 \right) },
     \end{align*}
     we ensure $\C \ge \frac{\beta}{2}$, so that $\textrm{Im}(\mathcal{S}) \subset X_T$.
     Finally, existence and uniqueness follow from Banach's fixed point theorem.
\end{proof}

\begin{remark}
    In Theorem \ref{thm:existence_uniqueness}, we initially assumed $D=0$. However, this restriction can be relaxed to $D>0$ by applying semigroup theory. Indeed, we notice from the above proof that the function
    \begin{align*}
        f(\C(t,x)) := c^2 \left(  \Phi''(p) \nabla p \otimes \nabla p + \frac{\nabla\sigma\otimes\nabla p + \nabla p\otimes\nabla\sigma}{2} \right) - \nu || \C ||^{\gamma-2} \C
    \end{align*}
    is locally Lipschitz with respect to $\C$. Moreover, the operator $\Delta$ is the infinitesimal generator of a $C_0$ semigroup $\mathscr T(t)$ on $C^{1,\alpha}(\overline{\Omega};\mathscr{S}^d(\R))$. Then, by semigroup theory \cite[Section 6.1, Theorem 1.4]{pazy2012semigroups}, for every $\C_0(x) \in C^{1,\alpha}(\overline{\Omega};\mathscr{S}^d(\R))$ there exists a maximal time $t_{\mathrm{max}}<\infty$ for which the initial value problem (\ref{eq:gradient_flow}-\ref{eq:ic}) admits a unique mild solution $\C$ on the time interval $[0, t_{\mathrm{max}}[$, i.e.,
    \begin{align*}
        \C(t,x) = \mathscr T(t) \C_0(x) + \int_0^t \mathscr T(t-s) f(\C(s,x)) ds.
    \end{align*}
    Furthermore, it is shown that $\lim_{t \rightarrow t_{\mathrm{max}}} \| \C(t) \|_{1, \alpha} = \infty$.
\end{remark}

Notice that the energy expression defined in \eqref{eq:energy} is dependent on four parameters, namely, $E[\C] \equiv E_{D, c, \nu, \gamma}[\C]$. As the objective of this paper is to conduct several numerical experiments, we propose a parameter reduction through scaling. It is observed that $1/D$ and $c^2$ exert similar influences on the system dynamics. Specifically, as these parameters increase, the network number of branches increases \cite{marko_albi}. Consequently, we introduce the following scaling aimed at simplifying the numerical experiments
\begin{align}
\label{eq:energy_scaled}
    E_{D, c, \nu, \gamma}[\C] &= c^2 \int_{\Omega} \left( \frac{D^2}{c^2} \frac{||\nabla \C ||^2}{2} + \Phi''(p) \nabla p \cdot \left( \C + r \mathbb{I} \right) \nabla p + \frac{\nu}{c^2 \gamma} || \C ||^{\gamma} \right) \dx \notag \\
    &=: c^2 E_{\widetilde{D}, \widetilde{\nu}, \gamma}[\C]
\end{align}
where $\widetilde{D} := \frac{D}{c}$ and $\widetilde{\nu} = \frac{\nu}{c^2}$. The rescaled energy $E_{\widetilde{D}, \widetilde{\nu}, \gamma}[\C]$ modifies the original formulation such that the kinetic term is normalized. In contrast, the diffusion term becomes comparatively small (considering $D$ as a small parameter and $c$ as a large one), the metabolic term can be adjusted according to the specific experimental setup of interest, and the rescaled time variable is $\widetilde t = c^2 t$, that we rename as $t$ for simplicity.

%%%%%%%%%%%%%%%%%%%%%%%%%%%%%%%%%%%%%%%%%%%%%%%%%%%%%%%%%%%%%%%%%%%%%%%%%%%%%%%%%%%%%%%%%%%
\section{Variational formulation}
\label{sec:variational_formulation}
Let $\Omega\subset R$ be an open, bounded and sufficiently smooth domain in $\mathbb R^2$, %with convex polygonal boundary $\Gamma$, 
where $R$ is a rectangular region. We now rewrite  (\ref{eq:poisson}-\ref{eq:ic}) according to the scaling defined in \eqref{eq:energy_scaled}
%We consider an elliptic--parabolic system of partial differential equations for the symmetric conductivity tensor, $\mathbb{C}\in \mathbb{R}^{2\times 2}$, %under the influence of a diffusion term, representing randomness in the material structure, a decay term, describing the metabolic cost to maintain the network, 
%a pressure scalar force, $p\in \mathbb{R}$, and $\sigma\in \mathbb{R}$, an auxiliary variable that comes out from the derivation of the $L^2$--gradient flow of the system \textcolor{red}{(Revise in the future with the correct modeling, from energy constrained with Poisson and only then taking the GF)}. The model, introduced in \cite{portaro2022emergence}, reads
\begin{subequations}
\begin{align}
\label{eq_darcy_p_tens}
    &-{\rm div} \left(\left(\mathbb{C} + r \mathbb{I} \right) \nabla p \right) = S \qquad && {\rm in }\,(0, \infty) \times \Omega\\
    \label{eq_darcy_sigma_tens}
    & -{\rm div} \left( \left(\mathbb{C} + r \mathbb{I} \right) \nabla \sigma \right) = \Phi'''(p)\nabla p\cdot\left(\mathbb{C} + r \mathbb{I} \right)\nabla p \qquad && {\rm in }\, (0, \infty) \times \Omega \\
    \label{eq_reaction_diff_tens}
  &\frac{\partial \mathbb{C}}{\partial t} = \widetilde D^2\Delta \mathbb{C} + \Phi''(p)\nabla p\otimes \nabla p 
 + \frac{\nabla\sigma\otimes\nabla p + \nabla p\otimes\nabla\sigma}{2} -  \widetilde \nu||\mathbb{C}||^{\gamma - 2}\mathbb{C}, \quad && {\rm in }\,(0, \infty) \times \Omega,
\end{align}
\end{subequations}
%The term $S = S(x)$ denotes the distribution of sources and sinks, which is a known datum. The function $r :\Omega \to \mathbb{R}^+$, with $r(x) \geq r_0 > 0$, describes the isotropic background permeability of the medium. The function $\Phi : \mathbb R \rightarrow \mathbb R$ is the convex entropy generating function. The diffusion coefficient ${D>0}$ controls the random effects in the transportation medium, while the activation parameter $c > 0$ describes the tendency of the network to align with the pressure gradient.
%The reaction term $\alpha ||\mathbb{C}||^{\gamma-2} \mathbb{C}$, where $|| \cdot ||$ denotes the Frobenius norm, models the metabolic cost of maintaining the network structure, with metabolic coefficient $\alpha>0$ and metabolic exponent $\gamma>0$. For instance, for leaf venation in plants, $1/2\leq \gamma \leq 1$  \cite{hu2013adaptation, hu2013optimization}. {For this range of values, there is no proof of convergence to a unique steady state, while it has been proved for $\gamma>1$ in \cite{marko_perthame_2}. Both cases have been numerically investigated in \cite{astuto2022comparison}.}
%For blood circulatory systems we choose $\gamma=1/2$, see \cite{hu2012blood} for details, while for modeling of leaf venation in plants we have $1/2\leq \gamma \leq 1$, see \cite{hu2013adaptation, hu2013optimization}.
together with the boundary conditions 
\begin{equation}
\label{eq_bc}
     \left(\mathbb{C} + r \mathbb{I} \right) \nabla p \cdot \widehat n = 0, \quad  \left(\mathbb{C} + r \mathbb{I} \right)\nabla \sigma \cdot \widehat n = 0, \quad \nabla \mathbb{C}\cdot \widehat n = 0 , \quad \text{for } x\in \Gamma, \, t\geq 0.
\end{equation}
For simplicity, in the equation we omit the dependence on the space and time variables of the functions.

The variables $p$ and $\sigma$ are  solutions of a diffusion equation with zero flux boundary conditions, consequently we impose a null-mean condition that guarantees uniqueness of the solutions. Therefore, we introduce the space $Q$ defined as
\begin{equation}
    Q = \biggl\{ q\in H^1(\Omega) : \int_\Omega q\,{ \rm d}x = 0\biggr\}.
\end{equation}
We also introduce the following tensor-valued space \textbf{V}, which takes into account the symmetry and positive semidefiniteness of the tensor $\mathbb C$, and is defined as follows
\begin{equation}
    \mathbf{V} = \biggl\{ \mathbb{B}\in[H^1(\Omega)]^{2\times2}:\mathbb{B}=\mathbb{B}^\top, \, \mathbb B\geq 0 \biggr\}.
\end{equation}
Multiplying (\ref{eq_darcy_p_tens}--\ref{eq_darcy_sigma_tens}) by test functions $q, v \in Q$, taking the contraction of \eqref{eq_reaction_diff_tens} against a test tensor $\mathbb{B}\in\mathbf{V}$, and integrating over $\Omega$, we obtain

\begin{subequations}
\begin{align}
&-\int_\Omega {\rm div} \left( (\mathbb{C}+r\mathbb{I}) \nabla p \right)\, q\,{ \rm d}x  = \int_\Omega S\, q\,{ \rm d}x && \forall q\in Q \\
&-\int_\Omega {\rm div} \left( (\mathbb{C}+r\mathbb{I}) \nabla \sigma \right)\, v \,{ \rm d}x  = \int_\Omega \Phi'''(p)\nabla p\cdot\left(\mathbb{C}+r\mathbb{I}\right)\nabla p\, v\,{ \rm d}x && \forall v\in Q \\
&\int_\Omega \frac{\partial \mathbb{C}}{\partial t} : \mathbb{B}\,{ \rm d}x - \widetilde D^2\int_\Omega (\Delta \mathbb{C}) : \mathbb{B}\,{ \rm d}x + \widetilde \nu \int_\Omega ||\mathbb{C}||^{\gamma - 2} \mathbb{C} : \mathbb{B}\,{ \rm d}x \\
& \notag \hspace{1.5cm} -\int_\Omega \left( \Phi''(p) \nabla p \otimes \nabla p + \frac{\nabla\sigma\otimes\nabla p + \nabla p\otimes\nabla\sigma}{2}\right) : \mathbb{B}\,{ \rm d}x  = 0 && \forall \mathbb{B}\in \mathbf{V}.
\end{align}
\end{subequations}
Integrating by parts and taking into account the boundary conditions, we have
\begin{subequations}
\begin{align}
&\int_\Omega \nabla q \cdot (\mathbb{C}+r\mathbb{I}) \nabla p \,{ \rm d}x  = \int_\Omega S\, q\,{ \rm d}x && \forall q\in Q \\
&\int_\Omega \nabla v \cdot (\mathbb{C}+r\mathbb{I}) \nabla \sigma \,{ \rm d}x  = \int_\Omega \Phi'''(p)\nabla p\cdot\left(\mathbb{C}+r\mathbb{I}\right)\nabla p\, v\,{ \rm d}x && \forall v\in Q \\
&\int_\Omega \frac{\partial \mathbb{C}}{\partial t} : \mathbb{B}\,{ \rm d}x + \widetilde D^2\int_\Omega \nabla \mathbb{C} : \nabla\mathbb{B}\,{ \rm d}x +  \widetilde \nu \int_\Omega ||\mathbb{C}||^{\gamma - 2} \mathbb{C} : \mathbb{B}\,{ \rm d}x \\
&\notag -  \int_\Omega \left( \Phi''(p)\nabla p \otimes \nabla p + \frac{\nabla\sigma\otimes\nabla p + \nabla p\otimes\nabla\sigma}{2} \right) : \mathbb{B}\,{ \rm d}x    = 0 && \forall \mathbb{B}\in \mathbf{V}.
\end{align}
\end{subequations}
Finally, we denote the scalar product in $L^2(\Omega)$ and its natural extension to $2$-dimensional tensors in the space of symmetric positive semidefinite tensors $\mathscr{S}^2(\R)$, i.e., $L^2(\Omega; \mathscr{S}^2(\R))$, using the same notation $(\cdot,\cdot)$ to simplify and reduce the notational complexity. Consequently, our problem, when expressed in its variational formulation, is as follows.

\begin{pro}\label{pro:variational}
Given $S \in L^2(\Omega)$ and $\mathbb{C}_0\in\mathbf{V}$, find $p,\sigma\in Q$ and $\mathbb{C} \in \mathbf{V}$ such that, for almost every $t\in(0,T)$, it holds
\begin{subequations}
\begin{align}
\left( \nabla q, (\mathbb{C}+r\mathbb{I}) \nabla p  \right) & = \left( S,q \right) && \forall q\in Q \\
\left( \nabla v, (\mathbb{C}+r\mathbb{I}) \nabla \sigma \right)  & = \left( \Phi'''(p)\nabla p\cdot\left(\mathbb{C} + r \mathbb{I} \right)\nabla p,v \right) && \forall v\in Q \\
\label{eq:variational_ev}\left(\frac{\partial \mathbb{C}}{\partial t},\mathbb{B}\right)  - &  \left(\Phi''(p) \nabla p \otimes \nabla p+ \frac{\nabla\sigma\otimes\nabla p + \nabla p\otimes\nabla\sigma}{2},\mathbb{B}\right)\\
\notag & = - \widetilde D^2\left(\nabla \mathbb{C},\nabla\mathbb{B}\right) -  \widetilde \nu \left( || \mathbb{C} ||^{\gamma - 2} \mathbb{C}(t) , \mathbb{B}\right) && \forall \mathbb{B}\in \mathbf{V} \label{eq:CB_cont}\\
 (\mathbb{C}+r\mathbb{I})\nabla p \cdot \widehat n &= 0 && \text{on } \Gamma \notag\\
 (\mathbb{C}+r\mathbb{I})\nabla \sigma \cdot \widehat n &= 0 && \text{on } \Gamma \notag\\
 \nabla \mathbb{C}\cdot \widehat n & = 0 && \text{on } \Gamma\\
\mathbb{C}(0,x) & = \mathbb{C}_0(x) && \text{in } \Omega.
\end{align}
\end{subequations}
\end{pro}
We remark that by the scalar product of two tensors in \eqref{eq:variational_ev}, we mean 
\begin{eqnarray*} \left(\frac{\partial {C}_{ij}}{\partial t},{B}_{ij}\right)  -  \left(\Phi''(p) \partial_{x_i} p \, \partial_{x_j} p + \frac{\partial_{x_i} \sigma \, \partial_{x_j} p +\partial_{x_i} p \, \partial_{x_j} \sigma}{2} ,{B}_{ij}\right)
\notag \\ = - \widetilde D^2\left(\nabla {C}_{ij}(t),\nabla {B}_{ij}\right) -  \widetilde \nu \left( || \mathbb{C}(t) ||^{\gamma - 2} {C}_{ij}(t) , {B}_{ij}\right), \quad i,j = 1,2,  \end{eqnarray*}
where $C_{ij}$ and $B_{ij}$ are the components of the tensors $\mathbb C$ and $\mathbb B$, respectively.

%%%%%%%%%%%%%%%%%%%%%%%%%%%%%%%%%%%%%%%%%%%%%%%%%%%%%%%%%%%%%%%%%%%%%%%%%%%%%%%%%%%%%%%%%%%
\section{Space discretization}
\label{sec:space_discretization}
%\gio{Non mi piace la figura messa all'inizio della sezione, fra il titolo ed il resto della sezione}
{We assume the region $R$ is the unit square $R = [0,1]^2$. We discretize it}  by a uniform mesh of squares of size $h=1/N$, where $N \in \mathbb N$ is the number of cells in each space direction (see Fig.~\ref{fig:Domain2D} (left panel)). Following the approach showed in  \cite{Osher2002,Russo2000,book:72748, Sussman1994}, the domain $\Omega$ is implicitly defined by a level set function $\phi(x,y)$ that is negative inside $\Omega$, positive in $R\setminus \Omega$ and zero on the boundary $\Gamma$:
\begin{eqnarray}
	\Omega = \{(x,y): \phi(x,y) < 0\}, \qquad
	\Gamma = \{(x,y): \phi(x,y) = 0\}.
\end{eqnarray}
The outer unit normal vector $\widehat n$ in \eqref{eq_bc} can be computed as $\widehat n = \frac{\nabla \phi }{|\nabla \phi|}$.
\begin{figure}[H]
    \centering
    \begin{minipage}{.49\textwidth}
\centering\begin{overpic}[abs,width=0.85\textwidth,unit=1mm,scale=.25]{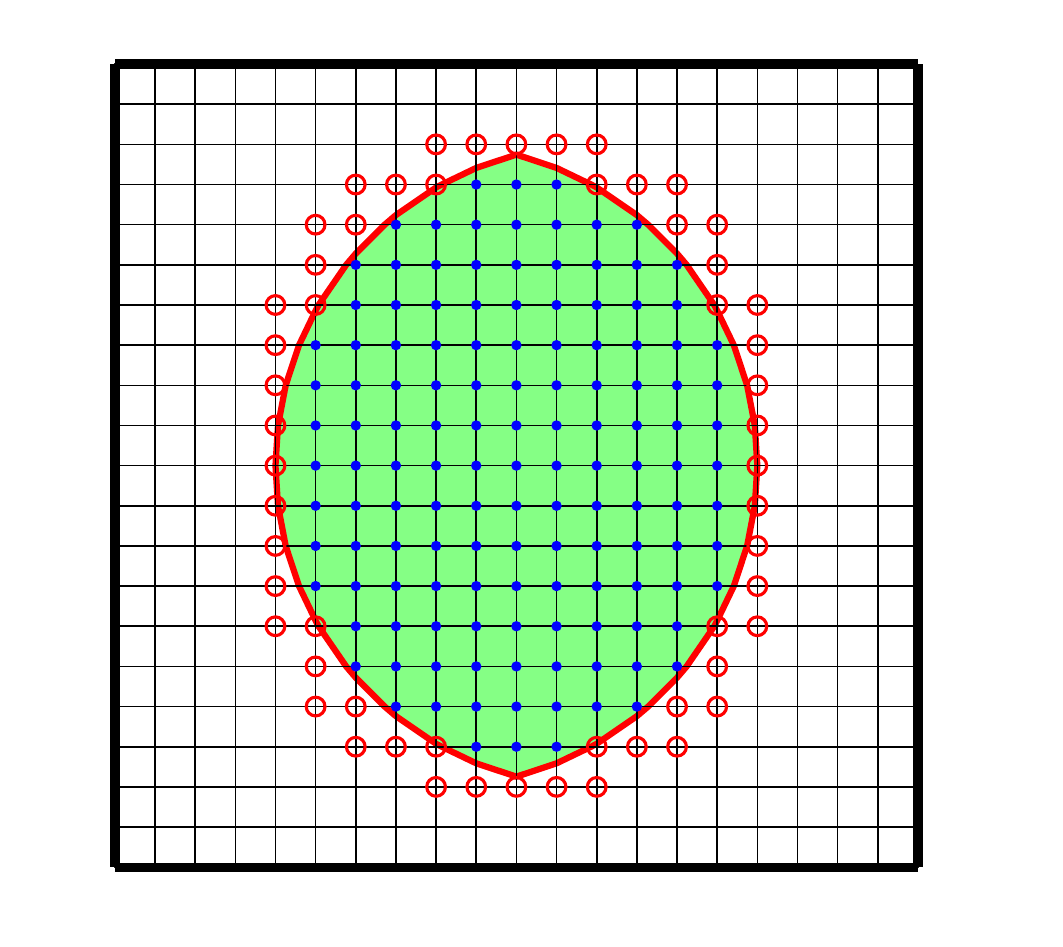}        
\put(-2,50){(a)}
\end{overpic}
    \end{minipage}
    \begin{minipage}{.49\textwidth}
\centering\begin{overpic}[abs,width=0.85\textwidth,unit=1mm,scale=.25]{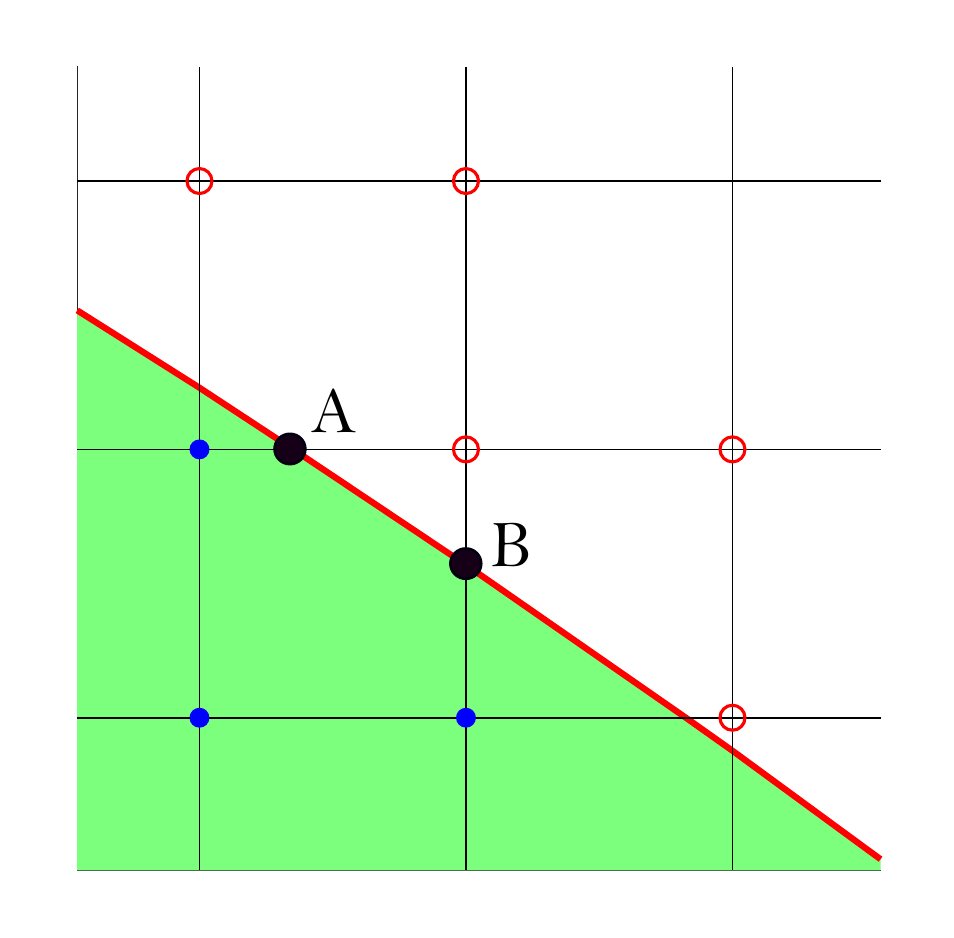}        
\put(-2,54){(b)}
\end{overpic}
    \end{minipage}
\caption{\textit{Discretization of the computational domain. $\Omega$ is the green region inside the unit square $R$. (a): classification of the grid points: the blue points are the internal ones while the red circles denote the ghost points. (b): points of intersection between the grid and the boundary $\Gamma$ (see the definition of $A$ and $B$ in Algorithm \ref{alg_ab}).}}  
\label{fig:Domain2D}
\end{figure}
%We discretize Problem~\ref{pro:variational} in space by finite elements. %Let us consider a mesh $\mathcal{T}_h$ of $\Omega$ with mesh size $h$, so that we can introduce two finite dimensional subspaces $Q_h\subset Q$ and $\mathbf{V}_h\subset\mathbf{V}$. The discrete problems is the following one.
%\gio{C'è un piccolo problema: noi assumiamo sin dall'inizio che $\Gamma$ sia $C^1$, ma poi i calcoli li facciamo su un dominio a foglia che presenta due punti angolosi}

%For example, for a circular domain centered in $(x_c,y_c)$, the most convenient level-set function in terms of numerical stability is the signed distance function between $(x,y)$ and $\Gamma$, i.e.\ $\phi=r-\sqrt{(x-x_c)^2+(y-y_c)^2}$, where $r$ is the radius of the circle.

The set of grid points will be denoted by $\mathcal N$, with $\# \mathcal N = (1+N)^2$, the active nodes (i.e.,\ internal $\mathcal{I}$ or ghost $\mathcal{G}$) by $\mathcal A = \mathcal{I}\cup\mathcal{G} \subset \mathcal N$, the set of inactive points by $\mathcal O \subset \mathcal N$, with $\mathcal O\cup\mathcal A = \mathcal N$ and $\mathcal O \cap \mathcal A = \emptyset$ and the set of cells by $\mathcal C$, with $\# \mathcal C = N^2$. Finally, we denote by $\Omega_c = R\setminus \Omega$ the outer region in $R$.

Here, we define the set of ghost points $\mathcal{G}$, which are grid points that belong to $\Omega_c$, with at least an internal point as neighbor, formally defined as
%\begin{equation}
%\notag
%	(x_i,y_i) \in \mathcal{G} \iff (x_i,y_i) \in {\mathcal N}\cap \Omega_c  \text{ and } \{(x_i \pm h,y_i),(x_i,y_i\pm h), (x_i \pm \sqrt{2}h,y_i), (x_i ,y_i\pm \sqrt{2}h) \} \cap \mathcal I \neq \emptyset.
%\end{equation}
%\gio{Ho cambiato la definizione dei ghost: era errata e l'indice $i$ non era ancora definito. }
\begin{equation}
\notag
	(x,y) \in \mathcal{G} \iff (x,y) \in {\mathcal N}\cap \Omega_c  \text{ and } \{(x \pm h,y),(x,y\pm h), (x \pm h,y\pm h) \} \cap \mathcal I \neq \emptyset.
\end{equation}

%\gio{di seguito una definizione alternativa dei ghost}
% \begin{equation}
% \notag
% 	(x,y) \in \mathcal{G} \iff (x,y) \in {\mathcal N}\cap \Omega_c  \text{ and } \{(x + k_1 h,y+k_2 h), k_1 \in \{-1,0,1\}, k_2 \in \{-1,0,1\} \} \cap \mathcal I \neq \emptyset.
% \end{equation}

The discrete spaces $Q_h$ and $\textbf{V}_h$ are given by the piecewise bilinear functions which are continuous in $R$.
As a basis of $Q_h$, we choose the following functions:
\begin{equation}
    \varphi_{i}(x,y) = \max\left\{
        \left(1-\frac{|x-x_i|}{h}\right) 
        \left(1-\frac{|y-y_i|}{h}\right),0
    \right\},
    \label{eq:V_h2}
\end{equation}
with $i = (i_1,i_2)$ an index that identifies a node on the grid. In the case of the symmetric positive semidefinite tensor $B_h \in \textbf{V}_h$, the basis for the four components are also defined in \eqref{eq:V_h2}. The generic element $u_h\in Q_h$ will have the following representation
\begin{equation}
    u_h(x,y) = \sum_{i\in\nodes}u_i\vphi_i(x,y).
    \label{eq:u_h2}
\end{equation}
To solve the variational problem \eqref{pro:variational}, we employ a finite-dimensional discetization. Specifically, the functions $q$ and $v$, which are originally defined in the function space $Q$, are approximated by functions $u_h$ and $v_h$ respectively, both of which belong to a finite-dimensional subspace $Q_h$. Additionally, the operator $\mathbb B$ is replaced by its discrete counterpart $\mathbb B_h$, acting within the space ${\bf V}_h$. 
To perform computations, the  domain $\Omega$ is approximated by a polygonal domain $\Omega_h$. This approximation also extends to the boundary $\Gamma$, which is represented by $\Gamma_h$. Consequently, the original integrals defined over $\Omega$ and its boundary $\Gamma$ are now evaluated over $\Omega_h$ and $\Gamma_h$, respectively.

%%%%%%%%%%%%%%%%%%%%%%%%%%%%%%%%%%%%%%%%%%%%%%%%%%%%%%%%%%%5
\begin{pro}\label{pro:variational_2}
Given $S_h \in L^2(\Omega)$ and $\mathbb{C}_{0,h}\in\mathbf{V}_h$, find $p_h$, $\sigma_h\in Q_h$ and $\mathbb{C}_h \in \mathbf{V}_h$ such that, for almost every $t\in(0,T)$, it holds
%\gio{Penso sia meglio spezzare questo lungo sistema di equazioni , in modo che si evitino zone vuote attorno ad equazioni isolate}
%\gio{Si potrebbe usare l'ambiente subequations, che permette una numerazione delle formule più compatta. Clarissa sa come si adopera}
\begin{subequations}
\begin{align}
\label{eq:p_disc}
\left(\nabla q_h, (\mathbb{C}_h+r\mathbb{I}_h) \nabla p_h\right)  &= \left( S_h, q_h\right) & \forall q_h\in Q_h \\ \label{eq:sigma_disc}
\left( \nabla v_h,(\mathbb{C}_h+r\mathbb{I}_h) \nabla \sigma_h\right) & = \left( \Phi'''(p_h)\nabla p_h\cdot\left(\mathbb{C}_h + r\mathbb I_h\right)\nabla p_h
, v_h\right) &\forall v_h\in Q_h 
 \\ \nonumber  
\left( \frac{\partial \mathbb{C}_h}{\partial t} ,\mathbb{B}_h\right)  - \left(\Phi''(p_h) \right. & \left. \nabla p_h \otimes \nabla p_h + \frac{\nabla \sigma_h \otimes \nabla p_h + \nabla p_h \otimes \nabla \sigma_h}{2},\mathbb{B}_h\right) &\\ \label{eq:C_disc} & = - \widetilde D^2\left(\nabla \mathbb{C}_h, \nabla\mathbb{B}_h\right) -  \widetilde \nu \left( || \mathbb{C}_h ||^{\gamma - 2} \mathbb{C}_h , \mathbb{B}_h\right) & \forall \mathbb{B}_h\in \mathbf{V}_h
\end{align}
\end{subequations}
under the following boundary conditions
\begin{subequations}
\begin{align} 
\left(\mathbb C_h + r\mathbb I_h \right)\nabla p_h \cdot \widehat n &= 0 & \text{on } \Gamma_h \\ \nonumber
 \left(\mathbb C_h + r\mathbb I_h \right)\nabla \sigma_h \cdot \widehat n &= 0 & \text{on } \Gamma_h
 \\ \nonumber
 \nabla \mathbb{C}_h\cdot \widehat n & = 0 & \text{on } \Gamma_h
\\  \mathbb{C}_h(t 
 = 0) & = \mathbb{C}_{0,h} & \text{in }  \Omega_h.
\end{align}
\end{subequations}
\end{pro}

% Now we show how to compute the different integrals in Eqs.~(\ref{eq:p_disc}--\ref{eq:C_disc}). Let us start from the integral $\left(\nabla \mathbb{C}_h(t) , \nabla\mathbb{B}_h\right)$, focusing on the first component of the tensorial product on the $k-$th cell 
% \begin{align}
% \label{eq_int_discr1}
%       \left(\nabla C_{1,h}, \nabla B_{1,h}\right)_k. 
% \end{align}
% We remind that, any function can be written as a linear combination of the basis function, thus it is true that
% $p_h = \sum_{i = 1}^{N^2} p_i \varphi_i$, and it is also valid for its gradient, s.t. $\nabla p_h = \sum_{i = 1}^{N^2} p_i\nabla \varphi_i$, thus Eq.~\eqref{eq_int_discr1} can be written as
% \begin{align}
% \label{eq_int_discr2}
%       \left(\nabla C_{1,h}, \nabla B_{1,h}\right)_k = \sum_{i,j = 1}^{N^2}C_{1,j}\left(\nabla\varphi_j,\nabla\varphi_i \right)_k. 
% \end{align}
% Since
% \begin{eqnarray}
% \varphi_{\bf j}(\vec x_{\bf i})=\left\{
% \begin{array}{l}
% 1 \quad {\rm if }\, \textbf{j} = \textbf{i}\\
% 0 \quad {\rm otherwise} 
% \end{array}
% \right.
% \end{eqnarray}
% it is possible to reduce the summations to the vertices of the $k-$th cell, and 
% \begin{align}
% \label{eq_int_discr2}
%       \left(C_{(1,1),h},  B_{(1,1),h}\right)_k = \left(C_{(1,1),h},  \varphi_h\right)_k = \sum_{l,m = 0}^{3}C_{(1,1),k_l}\left(\varphi_{k_l},\varphi_{k_m} \right)_k. 
% \end{align}

\begin{figure}[H]
    \centering
    \begin{minipage}{.4\textwidth}
\begin{overpic}[abs,width=\textwidth,unit=1mm,scale=.25]{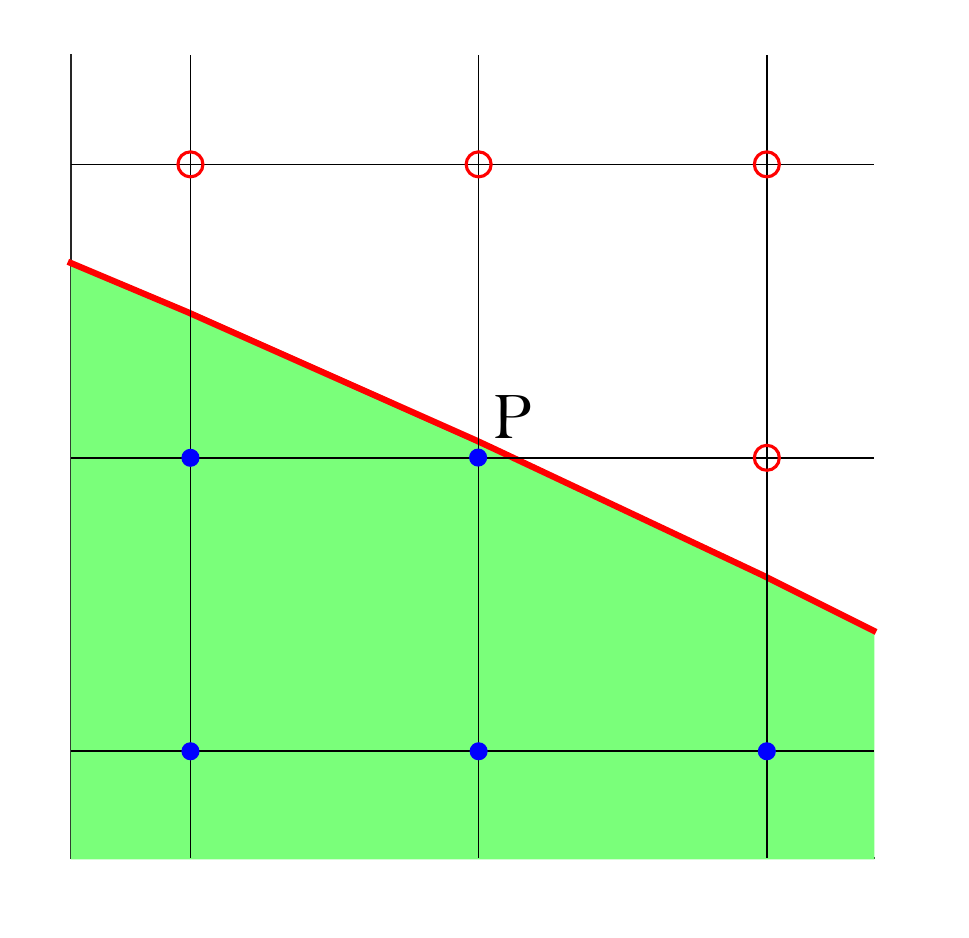}  
\put(-2,50){(a)}
\put(17,40){$\Gamma$}
\end{overpic}
    \end{minipage}
    \begin{minipage}{.4\textwidth}
\begin{overpic}[abs,width=\textwidth,unit=1mm,scale=.25]{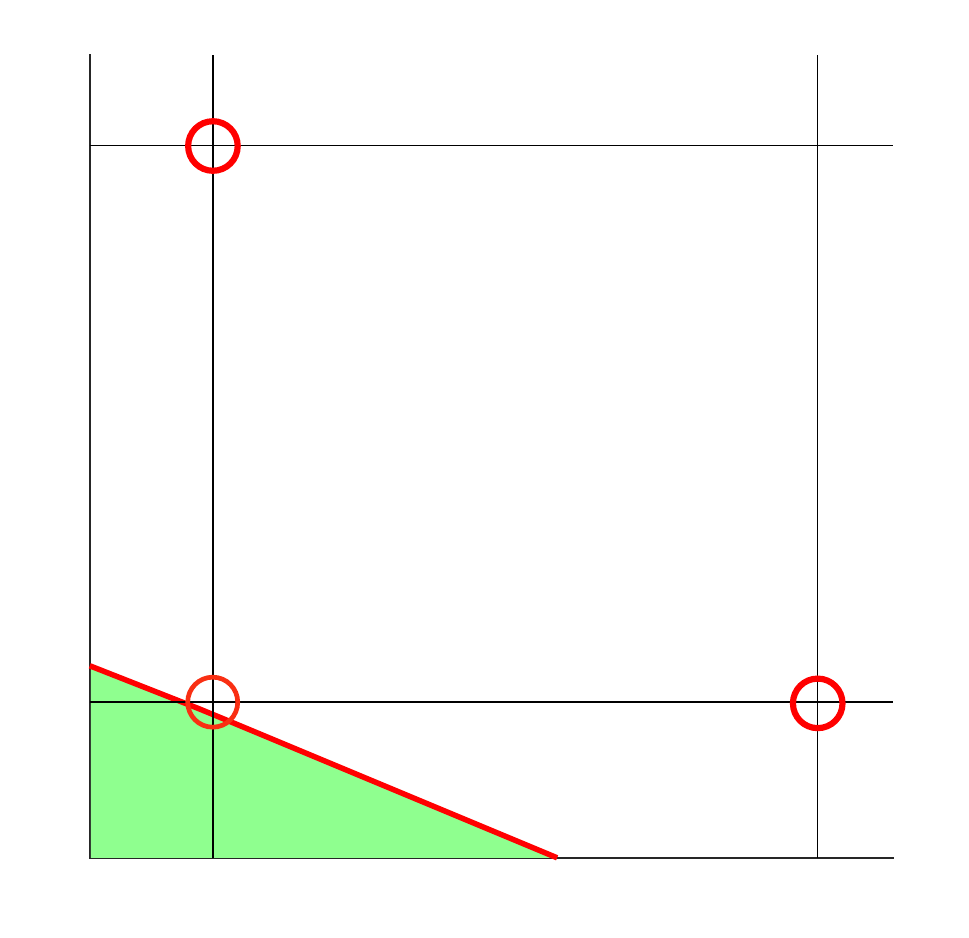}        
\put(-1,50){(b)}
\end{overpic}
    \end{minipage}
\caption{\textit{Grid before and after snapping technique. (a): representation of the cell related to the internal point $P$ (blue points), whose distance from $\Gamma$ is less than $h^2$; (b): zoom-in of the shape of the domain, after the grid point $P$ has changed its classification, from internal to ghost point (red circles).
}}  
\label{fig:snapping}
\end{figure}

\begin{algorithm}
\caption{Computation of the intersection of the boundary with the grid (see Fig. \ref{fig:ref_cell})}\label{alg_ab}
\begin{algorithmic}
\State $k_4 = k_0$
\For{i = 0:3}
\If{$\phi(k_i)\phi(k_{i+1})<0$} 
   \State $\theta = \phi(k_i)/(\phi(k_i)-\phi(k_{i+1})) $ 
   \State $P = \theta k_{i+1}+(1-\theta)k_i$
   \If{$\phi(k_i)<0 $}
       \State ${\bf A}:=P$
       \Else
       \State ${\bf B}:=P$
   \EndIf
\EndIf 
\EndFor
\end{algorithmic}
\end{algorithm}

To compute the integrals showed in Problem~\ref{pro:variational_2}, we use exact quadrature rules. To explain our strategy, let us start considering the product between two test functions $\varphi_i,\varphi_j \in Q_h$ restricted within the cell $K \in \mathcal C$. Since $\varphi_{{j}}({x}_{i},y_i)=\delta_{{i}{j}}, \, i,j \in \mathcal N$,
it is possible to reduce the summations to the vertices of the $K-$th cell (see Fig.~\ref{fig:ref_cell}), yielding 
\begin{align}
\label{eq_int_discr2}
      \left(\varphi_i, \varphi_j\right)_{L^2(K)} = \sum_{\eta,\mu = 0}^{m-1}\left(\varphi_{k_\eta},\varphi_{k_\mu} \right)_{L^2(K)},
\end{align}
where $m$ is the number of edges of the $K-$th cell, and it is $m = 4$ for the internal cells (those cells whose vertices belong to $\Omega_h$), but it can also be $m = 3$ or $m=5$ when a cell intersect the boundary (see, for example, Fig.~\ref{fig:ref_cell} (a)). We observe that the product of two elements in $Q_h$ is an element of $\mathbb{Q}_2(K)$, i.e., the set of bi-quadratic polynomials in $K$.
\begin{figure}[H]
    \centering
\begin{minipage}{.4\textwidth}
\centering
\begin{overpic}[abs,width=0.7\textwidth,unit=1mm,scale=.25]{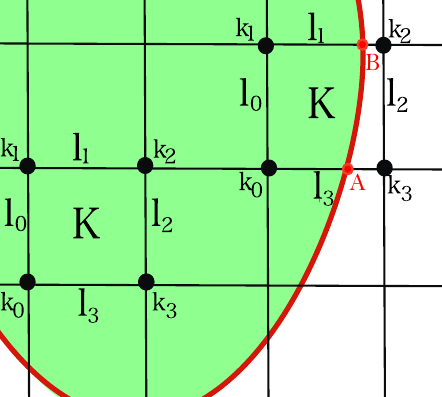}  
\put(-6,39){(a)}
\end{overpic}
    \end{minipage}
\begin{minipage}{.49\textwidth}
\centering \begin{overpic}[abs,width=0.65\textwidth,unit=1mm,scale=.25]{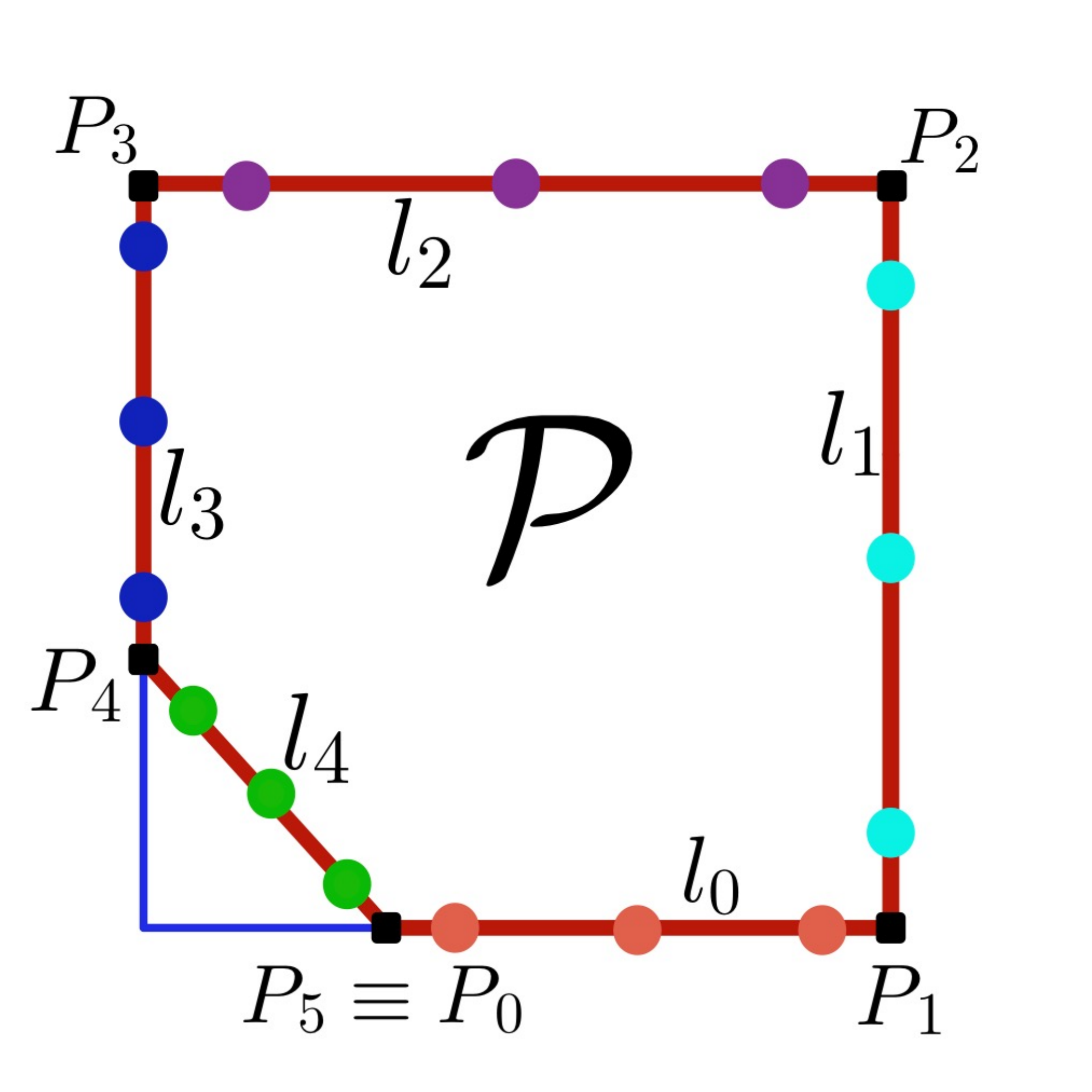}  
\put(-4,44){(b)}
\end{overpic}
    \end{minipage}
\caption{\textit{(a): representation of the indices $\{k_0, \ldots, k_{m-1}\}$, the edges $\{l_0,\ldots,l_{m-1} \}$ of a generic element $K$ and the intersection points $A$ and $B$ from Algorithm\ref{alg_ab}. (b): scheme of the three quadrature points (circles) for each edge $l_i, \, i = 0,\cdots,4$. The squared points represent the vertices $P_i,\, i = 0,\cdots,4,$ of the polygon $\mathcal P$.}}  
\label{fig:ref_cell}
\end{figure}

Let us consider a general integrable function $f$ defined in $\Omega$, with $F(y) = \int f(x,y) dx$ (in our strategy we consider the primitive in $x$ direction; analogue results can be obtained integrating in $y$ direction). We now define a vector $\textbf{F}= (F,0)^\top$ which has the property
$\diver \textbf{F} = f$.
Thus, we have
\begin{align}
\label{eq_diverg}
    \int_K f dx dy = \int_K \diver \textbf{F} \,dx\, dy = \int_{\partial K} \textbf{F}\cdot \textbf{n}\, dl,
\end{align}
where we applied Gauss theorem, and $\textbf{n} = (n_x,n_y)$ is the outer normal vector to $K$.
If $\mathcal{P}$ is the generic polygon with $m$ edges $l_r, \, r = 0,\ldots,m-1$ (see Fig.~\ref{fig:ref_cell}) 
 we can express \eqref{eq_diverg} as
\begin{align}
\label{eq_gauss_interval}
   \int_\mathcal{P} \diver \textbf{F}\, dx\, dy = \int_{\partial \mathcal{P}} \textbf{F}\cdot \textbf{n}\, d l = \sum_{r=0}^{m-1} \int_{l_r} \textbf{F}\cdot \textbf{n}\, dl = \sum_{r=0}^{m-1} \int_{l_r}  F {n}_x\, dl = \sum_{r=0}^{m-1} \int_{l_r}  F dy. 
\end{align}
To evaluate the integral over the generic edge $l_r$, we  choose the three-point Gauss-Legendre quadrature rule, which is exact for polynomials in $\mathbb P_5(\mathbb R)$. Thus, we write
\begin{equation}
     \int_{l_r}  F dy = \sum_{s = 1}^3w_sF(\widehat x_{r,s},\widehat y_{r,s}) (y_{P_{r+1}}-y_{P_r})
\end{equation}
where $w_s$ % \in \{5/18,4/9,5/18\}$,
and $\widehat x_{r,s}, \, s = 1,2,3$ are the weights and the nodes, respectively, of the considered quadrature rule, see Fig.~\ref{fig:ref_cell} (b).
% If we denote by $l$ the generic edge of $\Gamma^h$, we can consider a change of variable from $l \to [0,1]$, s.t.   
% \begin{equation}
%     (\phi,\psi)_l = \left(\widetilde \phi,\widetilde \psi\right)_{[0,1]}|l| = \left( \sum_{j=1}^3 w_j\phi(\widetilde x_j)\psi(\widetilde x_j)\right) |l|,\quad \widetilde x_j\in [0,1],
% \end{equation}
% \begin{figure}
%     \centering
%    \includegraphics[width=0.4\textwidth]{ABCRZ/Figures/quadrature_cells.pdf}
%     \caption{\textit{Scheme of the three quadrature points (circles) for each edge $l_i, \, i = 0,\cdots,4$. The squared points represent the vertices $P_i,\, i = 0,\cdots,4,$ of the polygon $\mathcal P$.}}
%     \label{fig:quadr_cells}
% \end{figure}
Choosing $f=\varphi_{k_\eta}\varphi_{k_\mu}\in \mathbb{Q}_2(K)\subset \mathbb P_4(K)$, and making use of \eqref{eq_gauss_interval}, we write
\begin{equation}
     \left(\varphi_i, \varphi_j\right)_{L^2(K)} = \sum_{r=0}^{m-1} \left(\sum_{s=1}^3 w_s\Phi_{ij}(\widehat x_{r,s},\widehat y_{r,s}) (y_{P_{r+1}}-y_{P_r})\right)|l_r|.
\end{equation}
where $\Phi_{ij} = \int \varphi_i \varphi_j\, dx$, and the formula is exact because $\Phi_{ij} \in \mathbb P_5(K), \, \forall i,j \in \mathcal N$.

While calculating the above mentioned integrals, we notice that stability issues arise when considering cut cells in proximity of the boundary $\Gamma$. The problem derives from the fact that the size of the cuts 
 cannot be controlled and hence can be arbitrarily small. This may result in a loss of coercivity for the bilinear form. 
In Fig.~\ref{fig:snapping}, we show a case in which the stability of the numerical scheme fails. To avoid instability, % When the surface of a cell, that intersects the boundary, is proportional to a power of the space step, i.e.,$\propto h^\alpha$, we neglect that cell modifying the computational domain $\Omega_h$.  The {snapping back to grid} technique, that we show in Algorithm 2 (see Fig.~\ref{fig:snapping}, right panel) is a way to avoid an ill--conditioned matrix, while maintaining the order of accuracy of the method. It consists of evaluating 
we evaluate the level set function $\phi$ at the vertices of each cell: if the value is {smaller than a threshold} proportional to a power of the length of the cell, i.e.,
{if $0<-\phi<\zeta h^\alpha$, for suitable chosen $\zeta$ and $\alpha$,} we disregard the respective cell {by setting the level set function in that point to a small positive value, as illustrated in Algorithm~\ref{alg_snap}.} In \cite{astuto2024nodal}, the authors present two distinct scenarios starting from a convex domain: one, where the remaining computational domain retains its convex nature, and a second scenario where convexity is lost due to the application of the snapping back to grid technique. For these two different cases, a different {\em a priori\/} analysis was conducted.
Taking advantage of the variational formulation underlying the numerical method, the authors have proven its convergence with second-order accuracy in the ${L}^2$ norm, for general smooth domains and boundary conditions. This approach allows us to preserve the symmetry and the coercivity of the continuous problem in the discrete setting. 
The convergence analysis of the numerical scheme in question is close to that of other classical unfitted finite element methods (FEMs); see, for example, \cite{Burman2015,Lehrenfeld2016}.

\begin{algorithm}[H]
\caption{Snapping back to grid}\label{alg_snap}
\begin{algorithmic}
\For{$k \in \mathcal N$} 
\If{$\phi(k)<0 \quad \& \quad |\phi(k)|<
\zeta h^\alpha$} 
   \State $\phi(k) := {\tt eps} $ 
\EndIf 
\EndFor
\end{algorithmic}
\end{algorithm}

\section{Time discretization}
\label{sec:time_discretization}
We now discuss discretization in time. We consider a final time $T$ and define the time step $\Delta t = T/M,\, M \in \mathbb N,$ denoting the nodes in time by $t^n = n\Delta t$ and $\mathbb{C}_h^n \approx \mathbb{C}_h(t^n), \, n = 0,\cdots,M$. We {adopt}  a first order semi-implicit approximation of the time derivative of $\mathbb{C}_h$ because our focus is on the computation of the steady-state solution. Making use of Implicit Euler scheme, we have
\begin{equation*}
    \frac{\partial \mathbb{C}_h(t_{n+1})}{\partial t}\approx\frac{\mathbb{C}_h^{n+1} -\mathbb{C}_h^{n}}{\Delta t}
\end{equation*}
so that \eqref{eq:C_disc} can be rewritten as 
\begin{align} \notag
    \left(\, \mathbb{C}_h^{n+1}, \mathbb{B}_h \,\right) + \Delta t\, \widetilde D^2 \left(\, \nabla \mathbb{C}_h^{n+1}, \nabla \mathbb{B}_h \,\right) &- \Delta t\,   \left(\, \Phi''(p_h^n)\nabla p^n_h \otimes \nabla p^n_h + \frac{\nabla \sigma^n_h \otimes \nabla p^n_h + \nabla p^n_h \otimes \nabla \sigma^n_h}{2}, \mathbb{B}_h\,\right)\\ \label{eq:C_time_dep}  &+ \Delta t\,  \widetilde \nu \left(\, \left(|| \mathbb{C}_h^{n} || + \varepsilon \right)^{\gamma - 2} \mathbb{C}_h^{n+1},\mathbb{B}_h \right)
    = \left(\, \mathbb{C}_h^{n}, \mathbb{B}_h \,\right),
\end{align}
where we add a \textit{regularization parameter} $\varepsilon > 0$, to prevent the instability coming from the division by zero {in the metabolic term}, when $\gamma < 1$ (see \cite{astuto2022comparison, astuto2023asymmetry, astuto2023finite} for more details). 
In order to solve Problem~\ref{pro:variational_2}, we adopt a semi-implicit numerical scheme. At the time step $t^n$ we compute the following
\begin{subequations}
\begin{align}
\label{eq:p_time_dep}
  \left(\, \nabla q_h, \left(\mathbb{C}^n_h+r\mathbb{I}_h\right) \nabla p_h^n  \,\right)  & = \left(\, S_h , q_h \right) & \\ \label{eq:sigma_time_dep}
  \left(\, \nabla v_h, \left(\mathbb{C}^n_h+r\mathbb{I}_h\right) \nabla \sigma_h^n \,\right)  & = \left(\, \Phi'''(p^n_h)\nabla p^n_h\cdot\left(\mathbb{C}_h^n + r\mathbb I_h\right)\nabla p^n_h, v_h \right). &
\end{align} 
\end{subequations}
Also the nonlinear terms are calculated explicitly, such as 
$$\left(\, \Phi''(p_h^n)\nabla p^n_h \otimes \nabla p^n_h + \frac{\nabla \sigma^n_h \otimes \nabla p^n_h + \nabla p^n_h \otimes \nabla \sigma^n_h}{2}, \mathbb{B}_h\,\right),$$ as well as the nonlinear metabolic term $$\left(\, \left(|| \mathbb{C}_h^{n} || + \varepsilon\right)^{\gamma - 2} \mathbb{C}_h^{n+1},\mathbb{B}_h \right).$$

\section{Numerical results}
\label{sec:numerical_results}
In this section we apply the ghost nodal FEM \cite{astuto2024nodal} with the dual purpose of testing the newly developed method by verifying its accuracy and independence on grid orientation, and exploring the behavior of the solution of the biological transportation  model, for different geometries and various choices of the entropy generator function $\Phi(p)$. 
By specifying $\Phi(p)$, which affects the entropy dissipation term, 
we model various phenomena, as highlighted in \cite{portaro2022emergence}. 
A common example is given by $\Phi(p) = p^2 / 2$, {which corresponds to the minimization of Joule heating \cite{portaro2022emergence, khan2018entropy}}.
In this case, the pumping energy is determined by Joule's law, which states that the system power density is given by the scalar product of the current $J = \C \grad p$ and the potential gradient $\grad p$. {This scenario has been extensively studied in the literature, e.g.\ \cite{astuto2022comparison, astuto2023asymmetry, marko_pilli}}.
  
Another interesting entropy generator is given by {the Boltzmann entropy generator} $\Phi(p) = (p+1) (\ln (p+1) - 1)$, which converts the kinetic part of the energy \eqref{eq:energy_scaled} into the Fisher information \cite{portaro2022emergence}. {This quantity plays an important role in various mathematical fields, ranging from information theory and statistical parameter estimation \cite{rao1992information}, to statistical mechanics and fluctuations in thermodynamic equilibrium \cite{crooks2011fisher}, and even mesoscopic models in biological network formation \cite{portaro2024measure}.} 
{In \cite[Lemma 2.6]{arnold2001convex}, the sub- and super-entropies used in the relative entropy argument leading to a log-Sobolev inequality were the Boltzmann entropy and the quadratic entropy, respectively. In our context, there are no restrictions on the choice of entropy. Therefore, we can use a quartic entropy generating function $\Phi(p) = {p^4}/{12}$ to test the numerical stability of our scheme and explore beyond the quadratic threshold previously mentioned. This choice is noteworthy because nonlinear and non-local PDEs often exhibit poor behavior with higher powers, such as those seen in quartic entropy generating functions. For similar numerical reasons, a linear combination of the Fisher entropy and quartic entropy generators is also of interest.}

In summary, the expressions we choose for the second derivative of the entropy generating function $\Phi(p)$ are:
\begin{description}
    \item[\hspace{4cm}quartic] $\; \; \quad \qquad \Phi''(p) = p^2$
    \item[\hspace{4cm}Fisher] $\qquad \qquad \Phi''(p) = (p+1)^{-1}$ 
    \item[\hspace{4cm}combination] $\quad \Phi''(p) = 0.5\,p^2 + 0.5\,(p+1)^{-1}$
\end{description}

Regarding the domain $\Omega$, we start with a circular domain, to see if there are effects coming from the anisotropy 
of the grid. The equations presented in (\ref{eq:C_time_dep}-\ref{eq:sigma_time_dep}) are symmetric with respect to the $x$ and $y$ axis. Therefore, we expect that a symmetric domain would maintain their properties. {In Fig.~\ref{fig:symm},  the time evolution of the solution at selected times is presented to illustrate the moment in which symmetry is lost. Our numerical evidences suggest that the loss of symmetry is entirely attributable to the metabolic term. There is no loss of symmetry for $\widetilde \nu = 0$ and $\widetilde \nu \neq 0, \, \gamma > 1$. These tests led us to suspect that the loss of symmetry is mainly due to the small denominator in the term $||\mathbb C + \varepsilon||^{\gamma - 2}$, when $\gamma < 1 $ and $\mathbb C \approx 0$. For this reason, we conducted other tests that confirmed our theory. We also observe no loss of symmetry in the following tests: $\widetilde \nu \neq 0, \, \gamma < 1$ and $\varepsilon$ big enough (say $\varepsilon > 10^{-3}$) and $\widetilde \nu = \varepsilon^{2-\gamma}$, with $\varepsilon$ small (e.g., $\varepsilon = 10^{-4}$).}

{A second step to demonstrate that the considered numerical scheme is grid-independent is to examine the behavior of the solution when the domain $\Omega$ is rotated. This test is not meaningful in a circular domain. Therefore, we select a leaf-shaped domain to illustrate this point, as our focus is on the application of biological network formation in plant leaves.}

%\gio{Quanto scritto sopra non mi convince. In realtà le equazioni in un dominio circolare con dati iniziali simmetrici (invarianti per rotazioni di qualunque angolo) dovrebbero fornire una solizione simmetrica, la quale però rischia di non essere stabile e di rompersi un una meno simmetrica, che magari abbia una simmetria radiale, ed una invarianza per rotazione di per sottomultipli interi dell'angolo giro, ma che sia stabile. Questa parte andrebbe riscritta} 
%\todo[inline]{Fare test con soluzione che vari nel tempo: mostrare prima una simmetria circolare e mostrare dopo che la simmetria circolare si rompe e compaiono le ramificazioni}
Here we define the different domains that we consider.  In all our numerical results we set {\tt eps} in Algorithm \ref{alg_snap} to Matlab machine epsilon.

\subsubsection*{Circular domain} Let us start with a circle centered at $(x_0,y_0) \in \Omega \subset R = [0,1]^2$. The level-set function is 
\[\phi=\sqrt{(x-x_0)^2+(y-y_0)^2}-r_c,\]
where $r_c$ is the radius of the circle. In our numerical tests, $x_0 = y_0 = 0.5$ and $r_c = 0.45$.

\begin{figure}[H]
    \begin{tabular}{|c|c|c|c|}
    \hline &
     N = 200 & N = 400 & N = 800 \\
\hline \begin{turn}{90}quartic \end{turn} &
     \includegraphics[width=0.3\textwidth]{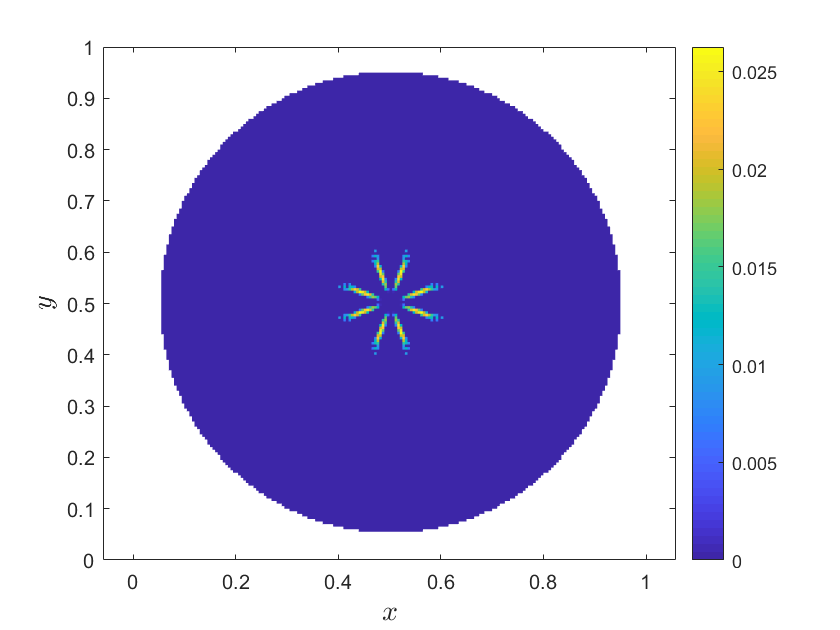} & \includegraphics[width=0.3\textwidth]{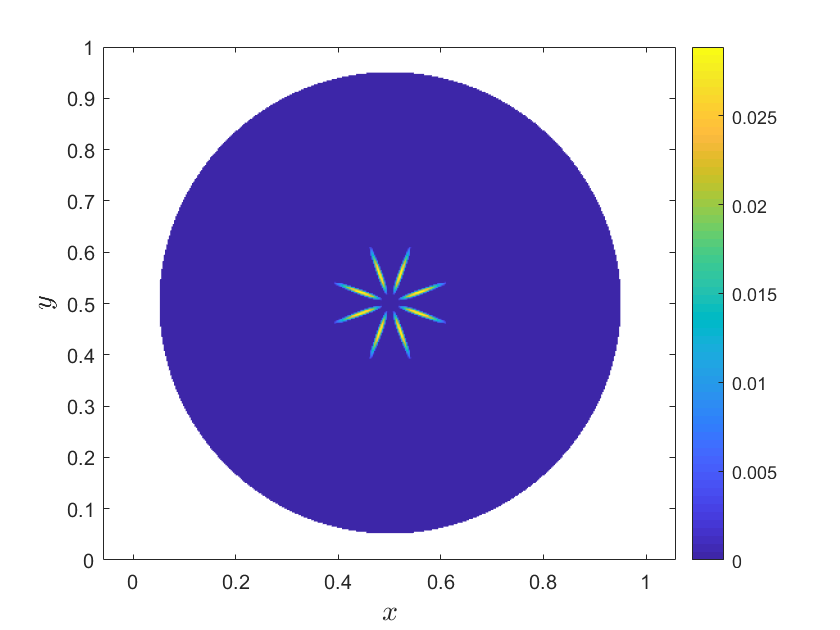}  &  %\includegraphics[width=0.3\textwidth]{}  
     \includegraphics[width=0.3\textwidth]{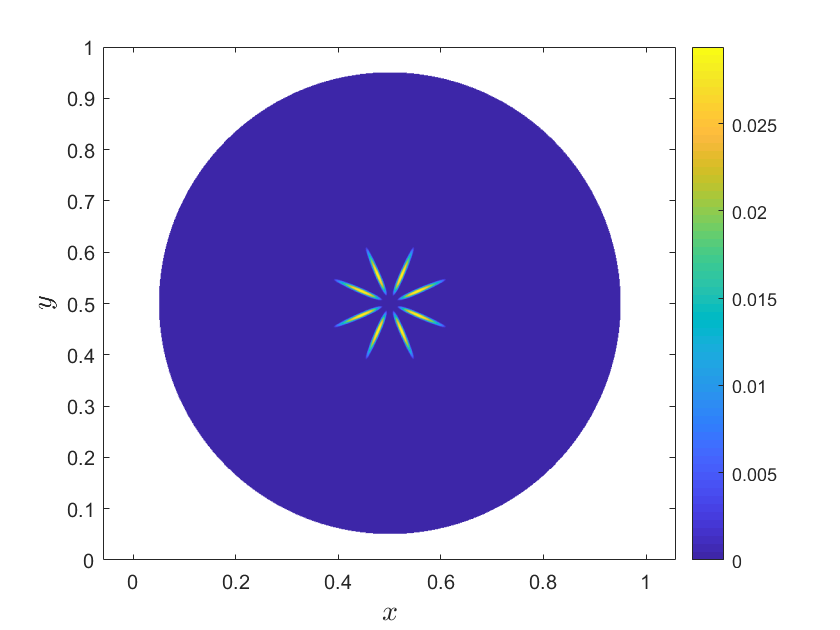} 
     \\
     \hline \begin{turn}{90}Fisher \end{turn} &
     \includegraphics[width=0.3\textwidth]{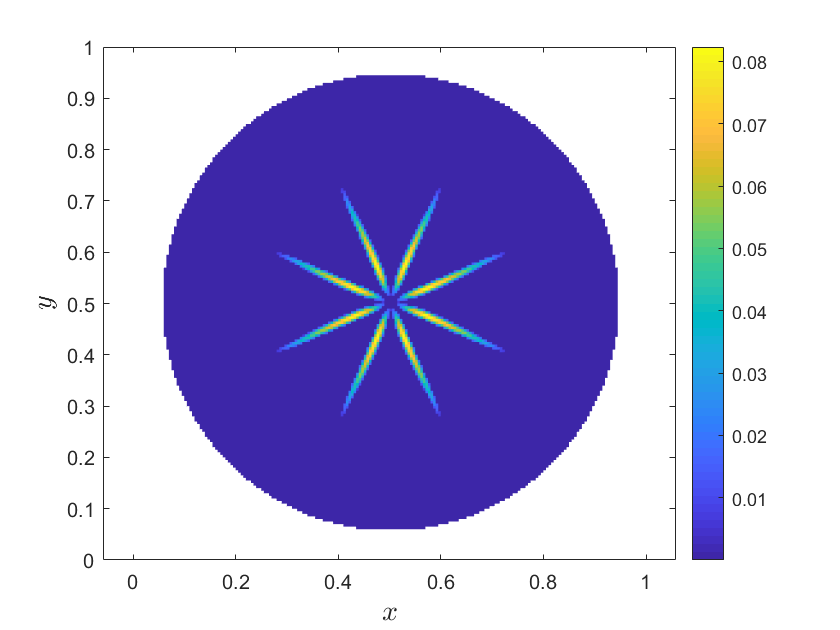} & \includegraphics[width=0.3\textwidth]{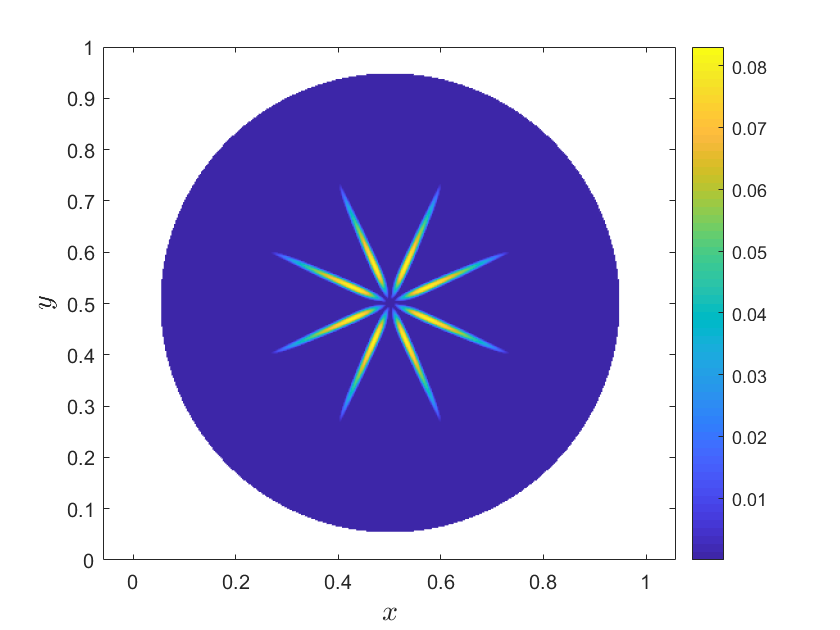}  &  \includegraphics[width=0.3\textwidth]{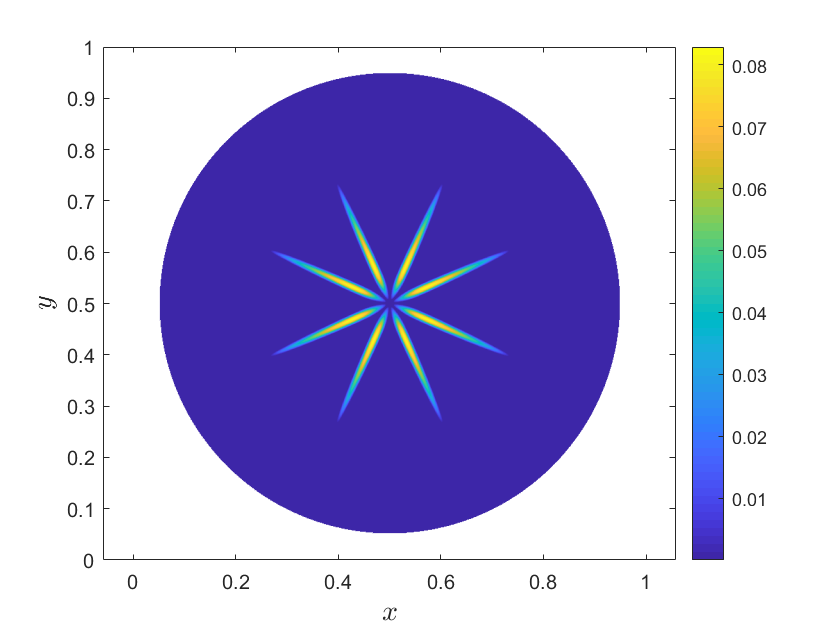}  \\ \hline
    \end{tabular}
    \caption{\textit{Configuration of the solution in a circular domain, changing the number $N$ of cells. All the parameters are in Table~\ref{tab:parameters}, and for the  Fisher case we choose $\widetilde D = 2.5\cdot 10^{-5}, \widetilde \nu = 0.1$. In all cases, the stable radially symmetric solution has eight main branches, i.e. it is symmetric with respect to rotations by $k\pi/4$, $k\in\mathbb{Z}$.}}
    \label{fig:configuration}
\end{figure}

\subsubsection*{Leaf-shaped domain} Here we present a leaf-shaped domain $\Omega \subset R = [0,1]^2$. In this case, the level-set function is the intersection of two circles centered at $C_1\equiv(x_1,y_1)\in \Omega$ and $C_2\equiv(x_2,y_2)\in \Omega$, with radius $r_c$. 
The respective level-set functions of the two circles are
\[\phi_1 = \sqrt{(x-x_1)^2+(y-y_1)^2}-r_c, \quad \phi_2 =\sqrt{(x-x_2)^2+(y-y_2)^2} -r_c,\]
and the one of the leaf-shaped domain is
\[ \phi = \max\{\phi_1,\phi_2\}. \]
In our numerical tests $x_1 = 0.4, x_2 = 0.6, y_1 = y_2 = 0.5$ and $r_c = 0.4$.

\subsubsection*{Rotated leaf-shaped domain}
Now we consider a domain which is rotated by an angle $\theta$ counterclockwise. 
The corresponding level-set function
is obtained by rotating the two centers $C_1$ and $C_2$. The new centers are then defined as follows 
% {\color{orange}
% \begin{align*}
% &(\widetilde x_1, \widetilde y_1) \in \Omega, % = 0.4,\, \widetilde y_1 = 0.5, 
% \quad  x_1 = \widetilde x_1\cos(\theta) - \widetilde y_1\sin(\theta), \, y_1 = \widetilde x_1\sin(\theta) + \widetilde y_1\cos(\theta) \\
% &(\widetilde x_2, \widetilde y_2) \in \Omega, %\widetilde x_2 = 0.6, \, \widetilde y_2 = 0.5, 
% \quad x_2 = \widetilde x_2\cos(\theta) - \widetilde y_2\sin(\theta), \, y_2 = \widetilde x_2\sin(\theta) + \widetilde y_2\cos(\theta) \\ 
% &R_1 = \sqrt{(x-x_1)^2+(y-y_1)^2}, \quad R_2 = \sqrt{(x-x_2)^2+(y-y_2)^2} \\         
% &\phi_1 = R_1-r_c, \quad \phi_2 = R_2-r_c, \quad \phi = \max\{\phi_1,\phi_2\},
% \end{align*}
% where $r_c$ is the radius of the circles.}
% \gio{La descrizione di sopra non è corretta: le funzioni $\phi_1$ e 
% $\phi_2$ sono identiche a quelle di prima. Le $R$ doverbbero essere centrate nei nuovi punti, non nei vecchi. Secondo me dovrebbe essere così: Clarissa: si vero! però forse $( x_1, y_1) \in \Omega$ ?}
\begin{align*}
&(x_1, y_1) \in \Omega, % = 0.4,\, \widetilde y_1 = 0.5, 
\quad  \widetilde x_1 =  x_1\cos(\theta) -  y_1\sin(\theta), \, \widetilde y_1 =  x_1\sin(\theta) +  y_1\cos(\theta) \\
&(x_2, y_2) \in \Omega, %\widetilde x_2 = 0.6, \, \widetilde y_2 = 0.5, 
\quad \widetilde x_2 =  x_2\cos(\theta) -  y_2\sin(\theta), \, \widetilde y_2 =  x_2\sin(\theta) + y_2\cos(\theta) \\ 
&R_1 = \sqrt{(x-\widetilde x_1)^2+(y-\widetilde y_1)^2}, \quad R_2 = \sqrt{(x-\widetilde x_2)^2+(y-\widetilde y_2)^2} \\         
&\phi_1 = R_1(x,y)-r_c, \quad \phi_2 = R_2(x,y)-r_c, \quad \phi = \max\{\phi_1,\phi_2\},
\end{align*}
where $r_c$ is the radius of the circles.

The choice of parameters is showed in Table \ref{tab:parameters}, while the expression for the source term $S$ is the following:
\[
S = \exp\left({-\omega((x-x_S)^2 + (y-y_S)^2)}\right),
\]
with $x_S = y_S = 0.5$ for the circular domain, and $x_S = 0.5, y_S = 0.2$ for the leaf-shaped domain. 
\begin{table}[H]
    \centering
    \begin{tabular}{||c||c||c||c||c||c||c||c||}
    \hline \hline
       $\widetilde D$ & $\widetilde \nu$ & $\varepsilon$ & $\gamma$ & $r$ & $\omega$  & $T$ & $\Delta t $ \\ \hline \hline
    $4\cdot 10^{-6}$ & $4\cdot 10^{-2}$ & $10^{-4}$ & $0.75 $ & $5\cdot 10^{-3}$ & 500 & 400 & $h$\\
  \hline \hline
    \end{tabular}
    \caption{\textit{Parameters of the numerical tests.}}
    \label{tab:parameters}
\end{table}

\begin{figure}[H]
    \begin{tabular}{|c|c|c|}
    \hline t = 6.25 & t = 12.5 & t = 18.75 \\
\hline
  \includegraphics[width=0.3\textwidth]{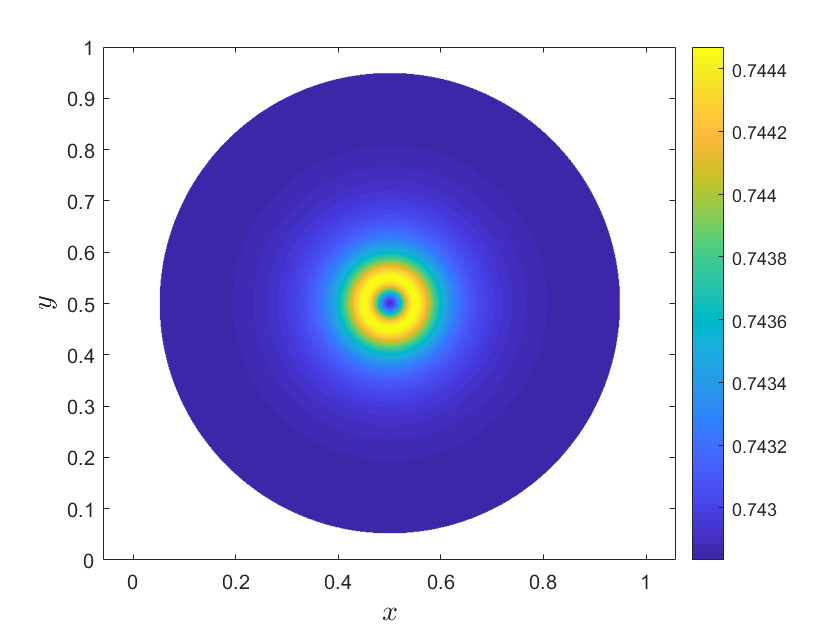}  &  \includegraphics[width=0.3\textwidth]{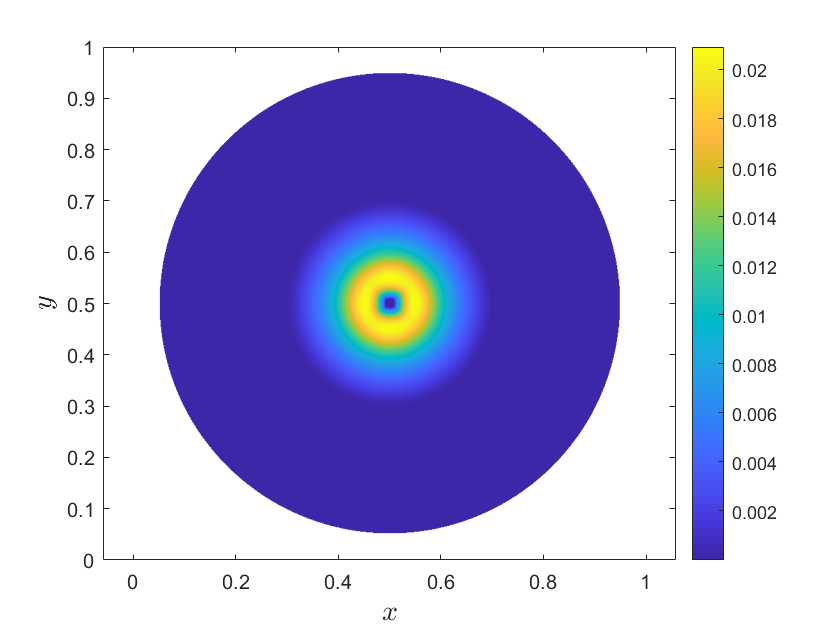} &
\includegraphics[width=0.3\textwidth]{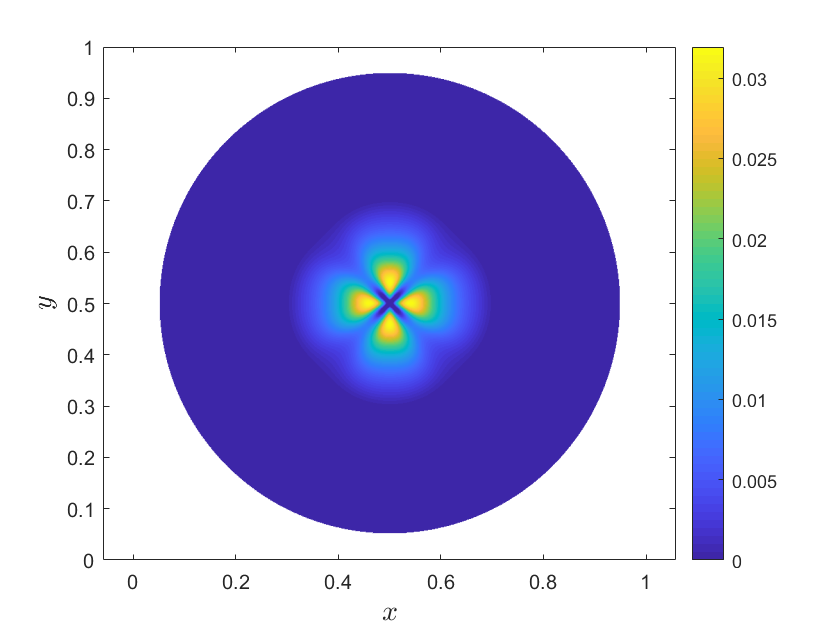} \\  \hline  t = 25 & t = 37.5 & t = 400 \\
\hline
\includegraphics[width=0.3\textwidth]{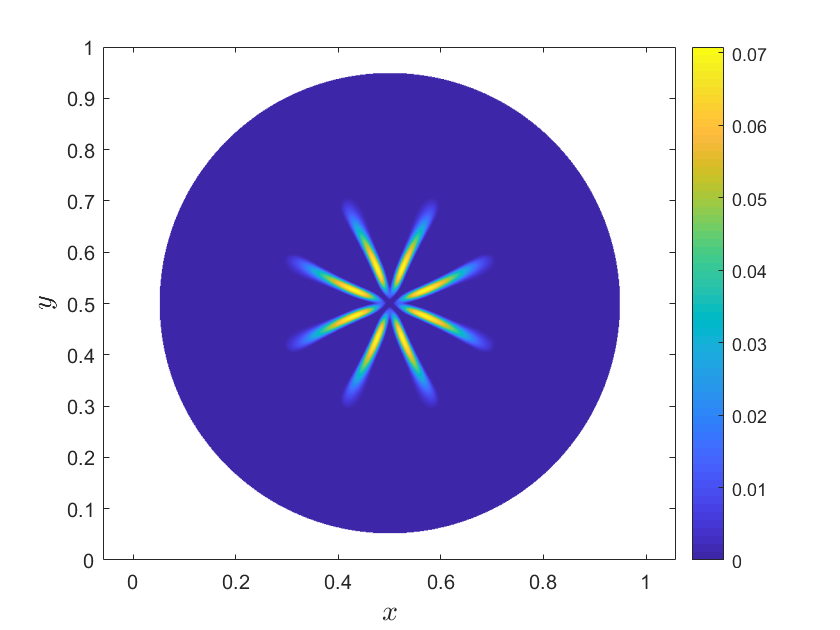}  &  \includegraphics[width=0.3\textwidth]{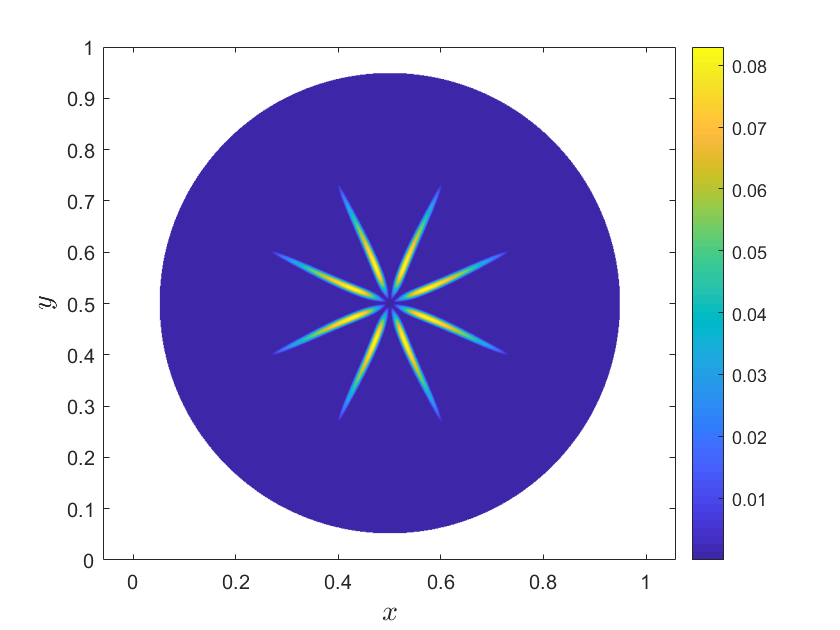} &
 \includegraphics[width=0.3\textwidth]{Figures_2D/circular_Fisher_N800.png} \\
\hline
 \end{tabular}
    \caption{\textit{Snapshots of the solution at various times, illustrating the progressive loss of symmetry. In these plots $N = 800$ and the other parameters are in Table \ref{tab:parameters}, with $\widetilde D = 2.5\cdot 10^{-5}$ and $\widetilde \nu = 0.1$.}}
    \label{fig:symm}
\end{figure}

The results are {illustrated after} the steady state is reached. To be sure of this, we show the energy decay of the entropy functional in \eqref{eq:energy_scaled}. Our numerical results primarily come from solving the system for the conductivity in the scalar case. Additional details regarding the tensor case are illustrated in Fig.~\ref{fig:entropy}. In Fig.~\ref{fig:accuracy}, we show the accuracy of the numerical solutions, and it is important to remark that the numerical solution reaches its configuration even in the coarsest grid, as shown in Fig.~\ref{fig:configuration}. In absence of the exact solution, we apply Richardson extrapolation technique (see, e.g.,~\cite{richardson1911ix}) to estimate the order of the method. To calculate the error, we use the Wasserstein distance\footnote{{To calculate the Wasserstein distances between two vectors, we make use of the matlab function available at https://github.com/nklb/wasserstein-distance.}}  \cite{carrillo,otto1996double,ottoF} between solutions computed with different spatial discretizations. The study in \cite{astuto2023asymmetry} demonstrates the benefits of using the Wasserstein distance over traditional $L^p$ norms for calculating the accuracy order, particularly in scenarios where the solution features multiple branches that vary in both position and value. In Fig.~\ref{fig:rot_domain_F}, we illustrate the results by rotating the leaf-shaped domain by an angle $\theta = \pi/4$. Table \ref{tab:rotation} further confirms these results, showing that the agreement improves as the grid is refined. In our numerical tests, we observed that the stationary solution does not depend separately on the three parameters $D^2, c^2$, and $\nu$, but rather on their ratio, as illustrated in \eqref{eq:energy_scaled}, and for this reason, we will only consider the variation of the parameters $\widetilde D$ and $\widetilde \nu$. As said before, we remark that the time $t$ considered here is actually a rescaled variable, obtained by multiplying it by the constant $c^2$. Moreover, {as expected,} the steady state is  reached {after a longer time for smaller values of the parameter $ \widetilde \nu$}, as shown in Fig.~\ref{fig:SS_nu}. In Fig.~\ref{fig:Dtilde}, we observe that as we decrease the parameter $\widetilde D$, there is a noticeable increase in the number of branches depicted.

\begin{figure}[H]
    \begin{tabular}{|c|c|}
\hline  quartic & Fisher \\ \hline \includegraphics[width=0.4\textwidth]{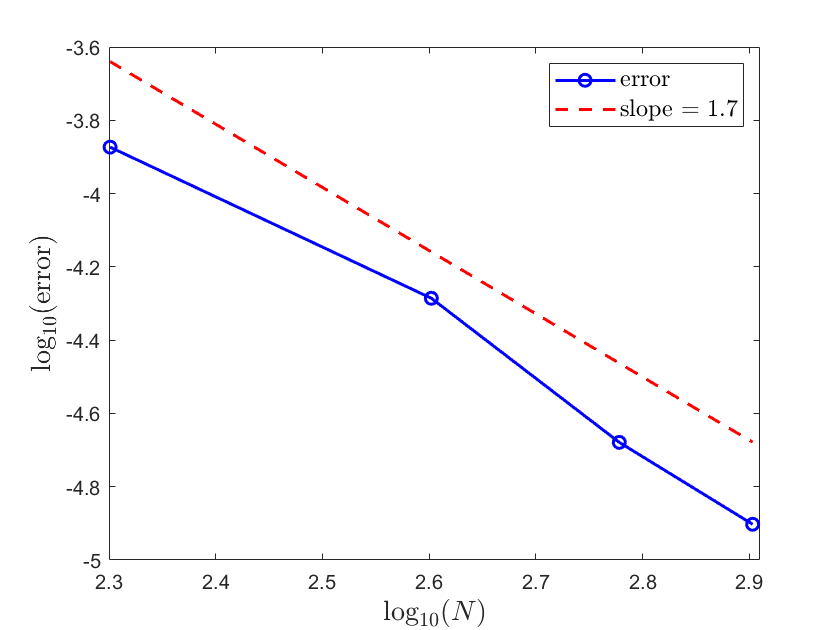} & \includegraphics[width=0.4\textwidth]{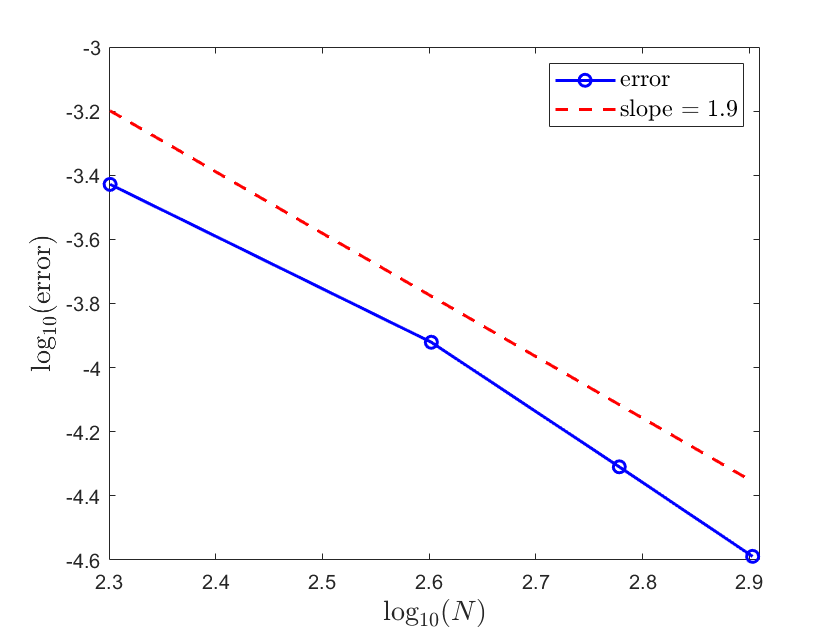} \\ \hline
    \end{tabular}
\caption{\textit{Error analysis for quartic and Fisher cases in a circular domain. All the parameters are in Table~\ref{tab:parameters}, and for the  Fisher case we choose $\widetilde D = 2.5\cdot 10^{-5}, \widetilde \nu = 0.1$.}}
    \label{fig:accuracy}
\end{figure}

In Fig.~\ref{fig:entropy}, we illustrate the results {for various} convex entropy functions, for the scalar and tensor cases. The results are in agreement, and in some cases the tensor show more branches. 
Furthermore, after showing that for $r = 5\cdot 10^{-3}$ we reach an accurate solution for $N\geq 800$, in agreement with the results shown in Figs.~\ref{fig:accuracy} and \ref{fig:log_r5em3}, in Figs.~\ref{fig:log_r1em3}-\ref{fig:log_r5em4} we show different tests for smaller values of $r$, the parameter that describes the isotropic background permeability of the medium. In \cite{astuto2023asymmetry}, the relation between the parameter $r$ and the symmetry of the solution is illustrated. When $r$ decreases, the network becomes more articulated, and the symmetry of the solution deteriorates. Anyway, thinner branches appear in the domain, and the typical plot that we used so far is not suitable to capture them. For this reason, in Fig.~\ref{fig:log_r1em3}-\ref{fig:log_r5em4} we {produce a contour plot of the} logarithm  of the solution, which emphasizes low values of $\mathbb{C}$, and its zoom-in of a part of the domain. In the Poisson equations for $p$ and $\sigma$, the term $ r\mathbb I$ tends to keep the conductivity away from zero.
When $\mathbb C \approx r$,  the conductivity becomes almost isotropic and the branches disappear. For this reason, in Fig.~\ref{fig:contour_r}, we show also the contour line for $\mathbb C = r$ (green, and $\mathbb C = r/2$ (red)), that corresponds to the smallest branches in the $\log \|\mathbb C\| $ plot.
We show two different cases, with $r = 10^{-3}$ in Fig.~\ref{fig:log_r1em3}, and $r = 5\cdot 10^{-4}$ in Fig.~\ref{fig:log_r5em4}. %\sout{and we notice that the first value of $r$ is not small enough to capture the small branches that, on the other hand, appear with $r = 5\cdot 10^{-4}$. We compare the solutions for different number $N$ of cells, and more branches appear when we increase $N$. }
We observe that the smaller the value of $r$, the more articulated the solution, with more and more small branches that occupy a wider region of the computational domain. When  $\mathbb{C}$ is smaller than $r$ then isotropic permeability dominates, and the branches disappear. On the other hand, for smaller and smaller values of $r$, smaller scales appear in the solution. Because of such small scales, to get accurate solutions become harder and harder as $r$ decreases, since more and more grid points are necessary to resolve the fine details of the solution.

% \begin{figure}[h]
%     \begin{minipage}{.49\textwidth}    
% \begin{overpic}[abs,width=\textwidth,unit=1mm,scale=.25]{}
% 		\end{overpic}	        \end{minipage}
%     \begin{minipage}{.49\textwidth}    
%     \begin{overpic}[abs,width=\textwidth,unit=1mm,scale=.25]{}
% 		\end{overpic}	    
%   \end{minipage}
%     \begin{minipage}{.49\textwidth}    
% \begin{overpic}[abs,width=\textwidth,unit=1mm,scale=.25]{}
% 		\end{overpic}	    
%   \end{minipage}
%     \begin{minipage}{.49\textwidth}    
% \begin{overpic}[abs,width=\textwidth,unit=1mm,scale=.25]{}
% 		\end{overpic}	    
%     \end{minipage}    
% \caption{\textit{Configuration of the solution in a Circular domain, changing the number of cells $N$. In these tests $(x_0,y_0) = (0.5,0.5)$, $r_c = 0.35$ and Fisher entropy function.}}  
% \end{figure}

%%%%%%%%%%%%%%%%%%%%%% end configuration figures
\begin{figure}[h]
    \begin{tabular}{|c|c|c|}
\hline \begin{turn}{90}$\qquad E[\mathbb C]$\end{turn}
      &  \includegraphics[width=0.4\textwidth]{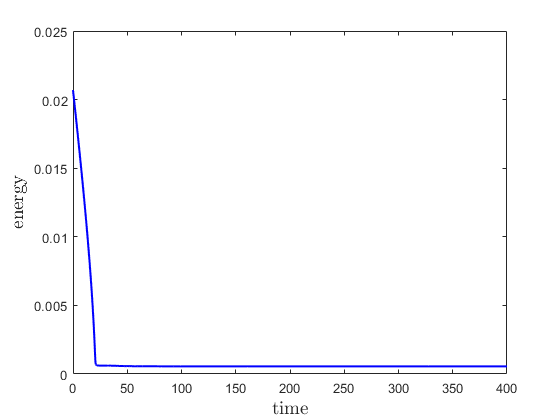} & \includegraphics[width=0.4\textwidth]{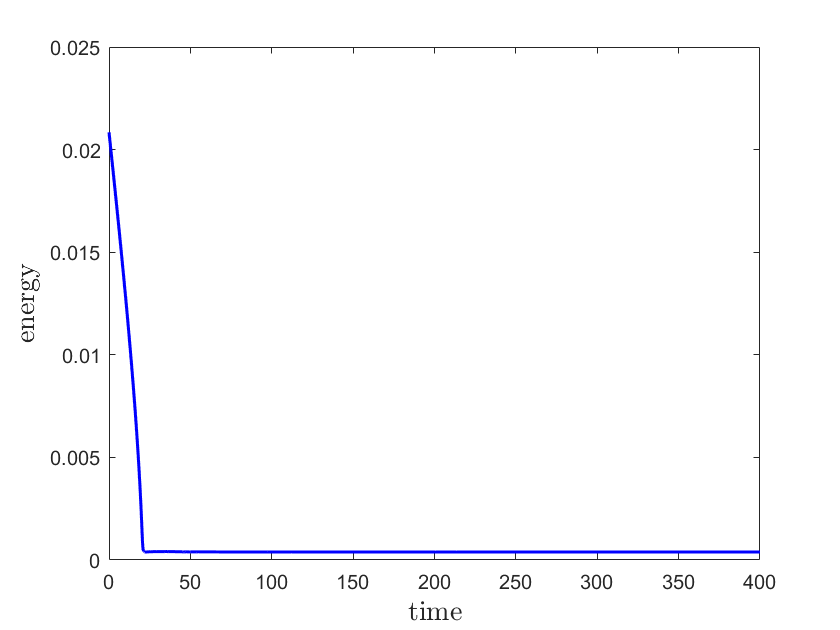}\\ \hline
    \begin{turn}{90} $\qquad \mathbb C$ \end{turn}
& \includegraphics[width=0.4\textwidth]{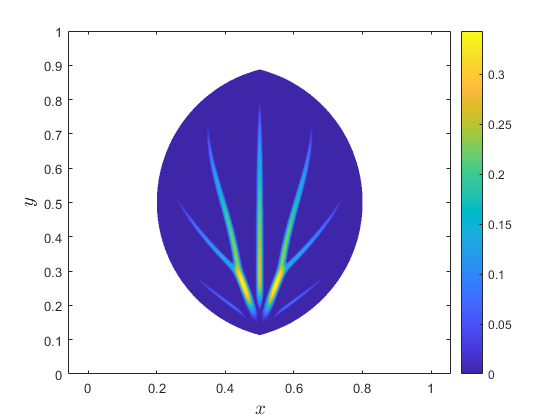}  &  \includegraphics[width=0.4\textwidth]{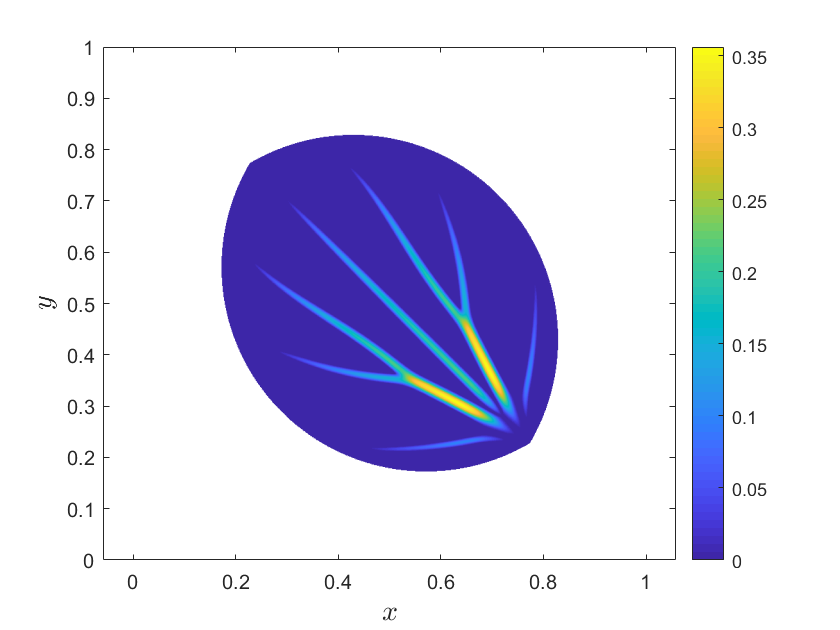}  \\ \hline
    \end{tabular}
    \caption{\textit{Comparison of solutions and energy behavior in leaf-shaped domains, with and without rotation.  In these plots $N = 800$ and parameters in Table \ref{tab:parameters}.%In these tests $(x_1,y_1) = (0.4,0.5), (x_2,y_2) = (0.6,0.5)$, $r_c=0.35, \theta = \pm \pi/2$ and Fisher entropy function.
    }}
    \label{fig:rot_domain_F}
\end{figure}

\begin{table}[]
    \centering
    \begin{tabular}{||c||c||}
    \hline \hline
N  & $\mathcal W$   \\ \hline \hline
  400 & 7.3178e-04 \\ \hline \hline
 600 & 6.1846e-04 \\ \hline \hline
  800 & 3.1681e-04 \\  \hline \hline
    \end{tabular}
    \caption{\textit{Wasserstein distance
    ${\mathcal W}$ between the solutions obtained with and without rotation, for different number of cells. }}
    \label{tab:rotation}
\end{table}

\begin{figure}[h]
    \begin{tabular}{|c|c|c|c|}
    \hline & $\widetilde \nu = 4\cdot 10^{-2}$ & $\widetilde \nu = 2\cdot 10^{-2}$  & $\widetilde \nu = 10^{-2}$ \\
\hline \begin{turn}{90}$\qquad E[\mathbb C]$\end{turn}
      &  \includegraphics[width=0.3\textwidth]{Figures_2D/energy_base_case_N800.png} & \includegraphics[width=0.3\textwidth]{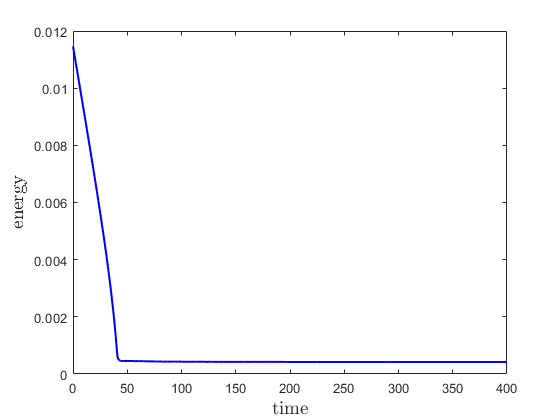} & \includegraphics[width=0.3\textwidth]{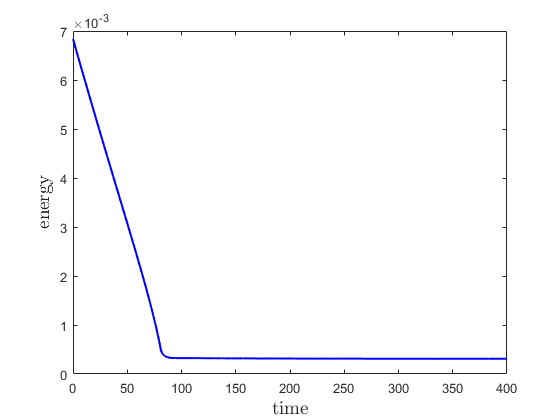}\\ \hline
    \begin{turn}{90} $\qquad \mathbb C$ \end{turn}
& \includegraphics[width=0.3\textwidth]{Figures_2D/solu_base_case_N800.png}  &  \includegraphics[width=0.3\textwidth]{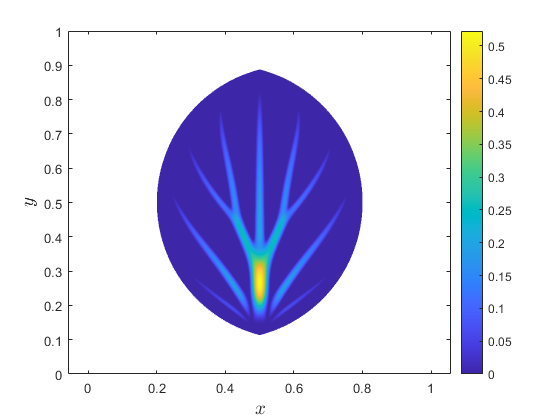}  &  \includegraphics[width=0.3\textwidth]{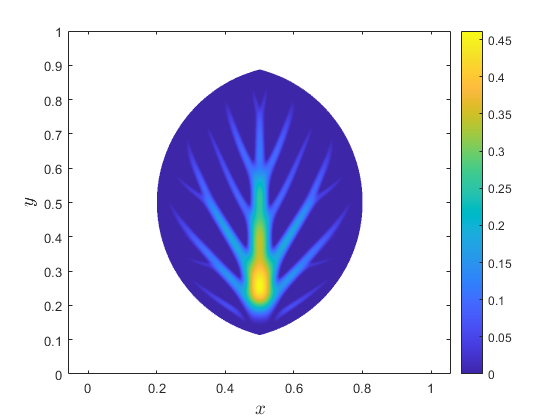}  \\ \hline
    \end{tabular}
\caption{\textit{Comparison of the solutions and of the energy behaviour changing the metabolic coefficient $\widetilde \nu$, with leaf-shaped domain.  In these plots $N = 800$ and other parameters are in Table \ref{tab:parameters}.}}%In these tests $(x_1,y_1) = (0.4,0.5), (x_2,y_2) = (0.6,0.5)$, $r_c=0.35$ and Fisher entropy function.}}  
\label{fig:SS_nu}
\end{figure}

\begin{figure}[h]
    \begin{tabular}{|c|c|c|} \hline
     $\widetilde D = 4\cdot 10^{-6}$ & $\widetilde D = 2\cdot 10^{-6}$  & $\widetilde D = 10^{-6}$ \\
\hline \includegraphics[width=0.3\textwidth]{Figures_2D/solu_base_case_N800.png}  &  \includegraphics[width=0.3\textwidth]{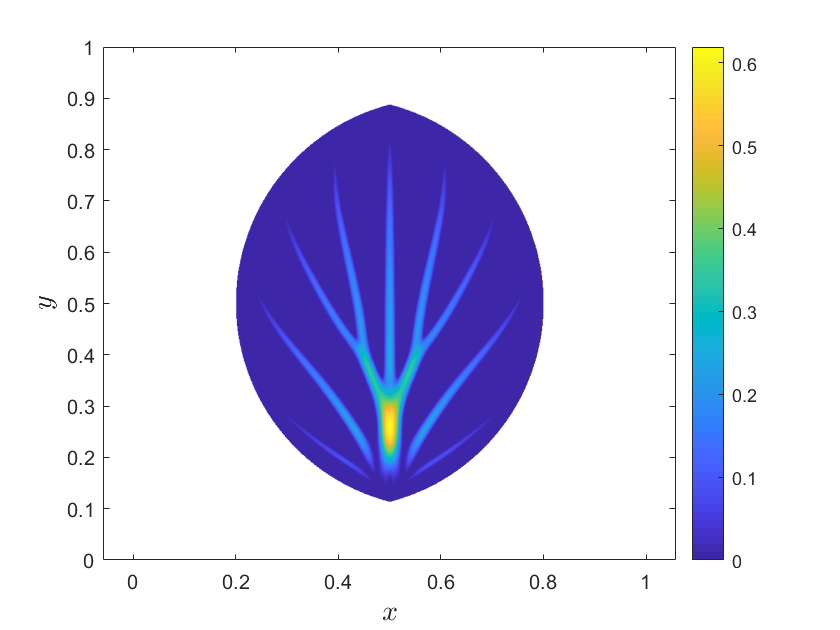}  &  \includegraphics[width=0.3\textwidth]{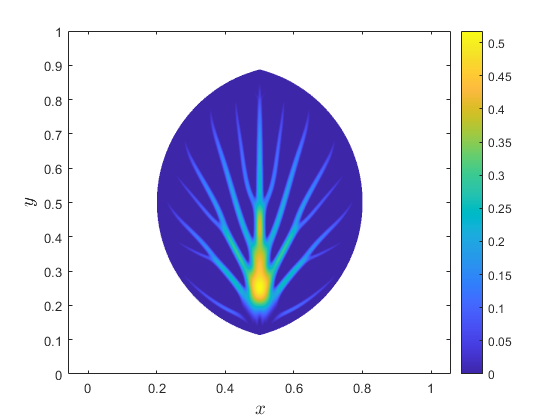}  \\ \hline
    \end{tabular}   
\caption{\textit{Comparison of the solutions decreasing the coefficient $\widetilde D$, with leaf-shaped domain.  In these plots $N = 800$  and other parameters are in Table \ref{tab:parameters}. }}% In these tests $(x_1,y_1) = (0.4,0.5), (x_2,y_2) = (0.6,0.5)$, $r_c=0.35$ and Fisher entropy function.}} 
\label{fig:Dtilde}
\end{figure}

\begin{figure}[h]
    \centering
    \begin{tabular}{|c|c|c|c|}
    \hline & quartic & Fisher & mixed \\
\hline \begin{turn}{90} scalar \end{turn}
 &
    \includegraphics[width=0.3\textwidth]{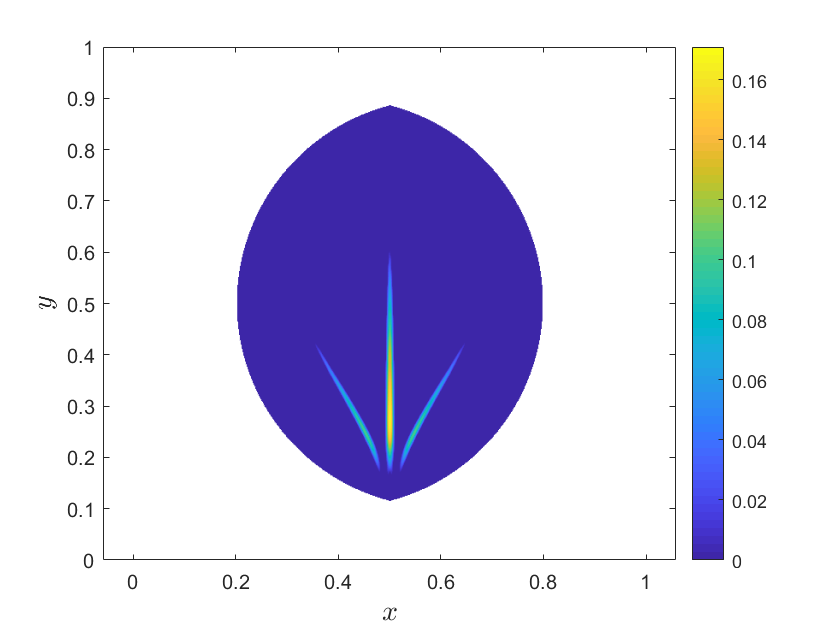} 
&
    \includegraphics[width=0.3\textwidth]{Figures_2D/solu_base_case_N800.png} 
&
    \includegraphics[width=0.3\textwidth]{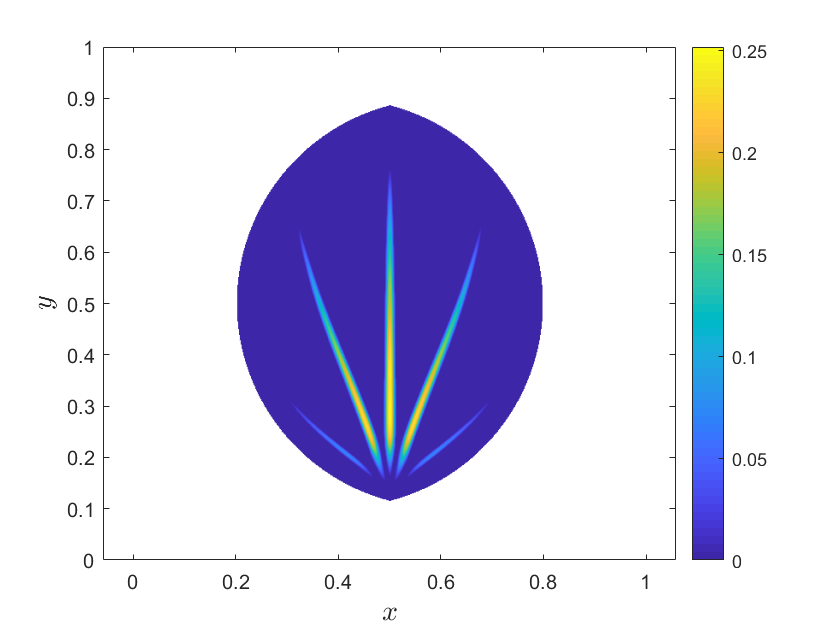}  \\
\hline \begin{turn}{90} tensor \end{turn}
 &
    \includegraphics[width=0.3\textwidth]{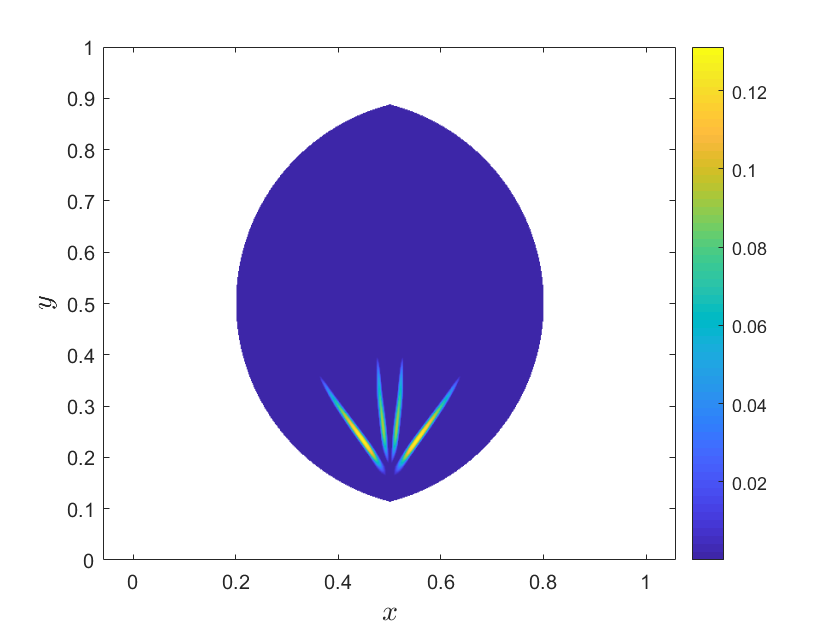} 
&
    \includegraphics[width=0.3\textwidth]{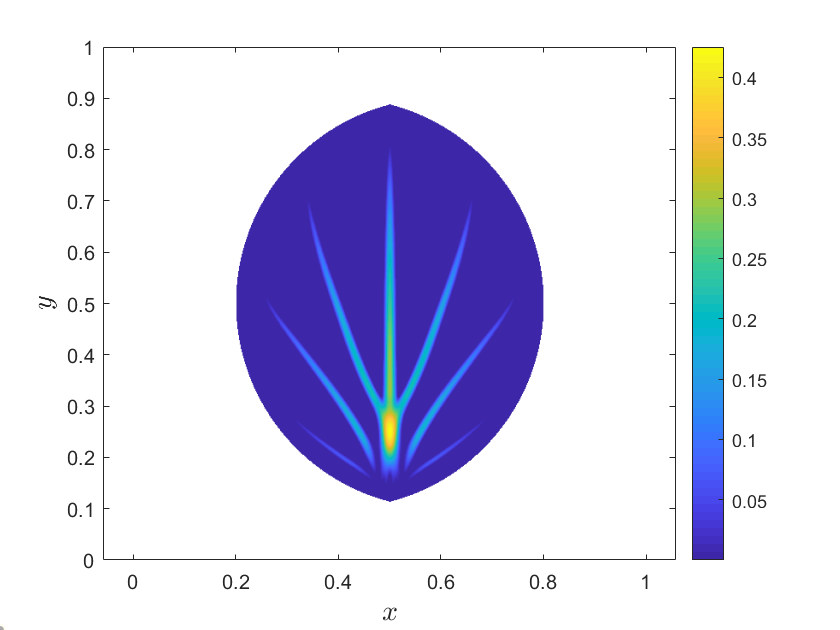} 
&
    \includegraphics[width=0.3\textwidth]{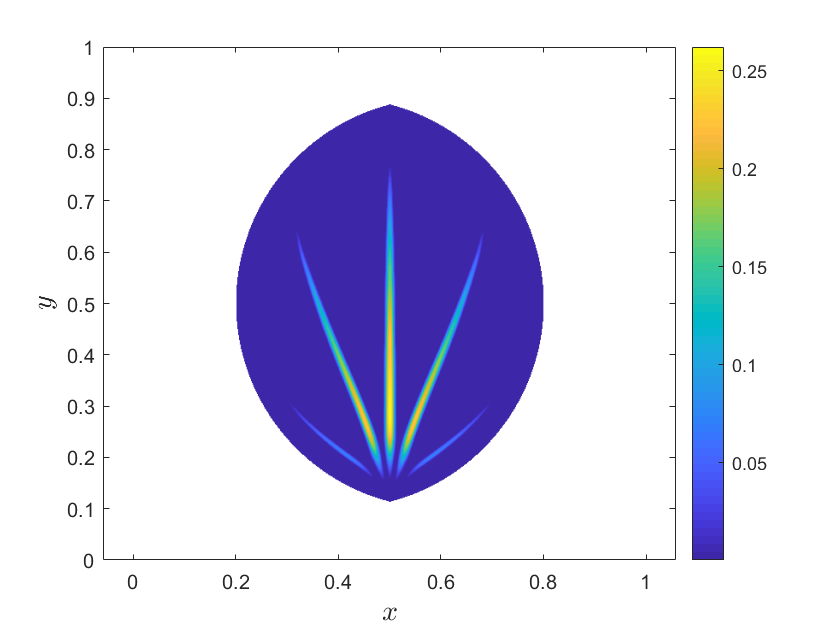}  \\
\hline    
\end{tabular}
\caption{\textit{Comparison of the solutions with leaf-shaped domain, considering different expressions of $\Phi''(p)$. For the scalar case, the solution $\mathbb{C}$ is directly plotted. For the tensor case, we represent the solution using its Frobenius norm, denoted as $||\mathbb{C}||$.
In these plots $N = 600$, while the other parameters are in Table \ref{tab:parameters}.}}
\label{fig:entropy}
\end{figure}

\begin{figure}[h]
    \begin{tabular}{|c|c|c|c|c|}
    \hline & N = 400 & N = 600 & N = 800 & N = 1000 \\
\hline \begin{turn}{90} $\mathbb C$ \end{turn} &
  \includegraphics[width=0.2\textwidth]{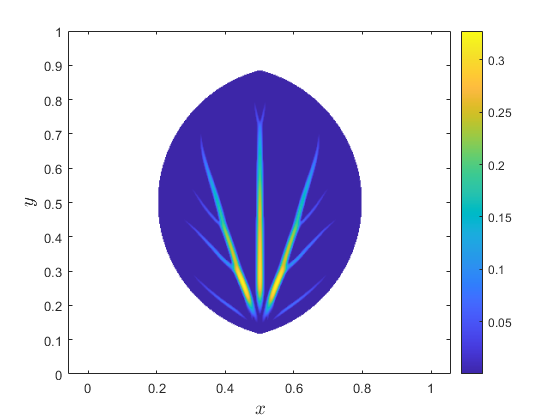}  &  \includegraphics[width=0.2\textwidth]{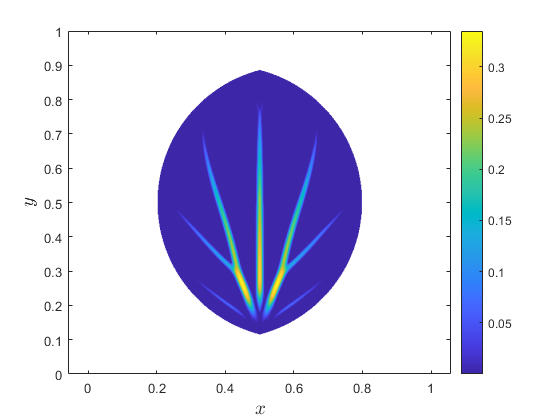} &
 \includegraphics[width=0.2\textwidth]{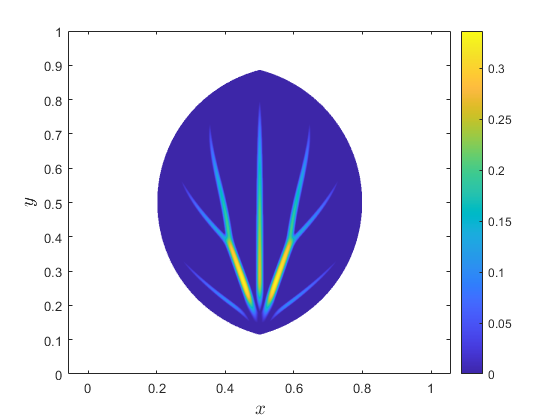} &
 \includegraphics[width=0.2\textwidth]{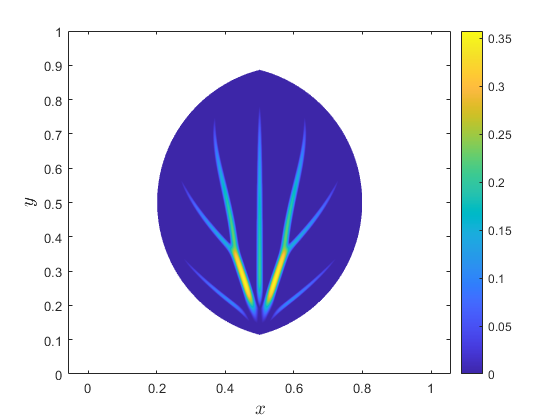} \\  \hline
  \begin{turn}{90} $\log \left( \mathbb C \right)$ \end{turn} & 
 \includegraphics[width=0.2\textwidth] {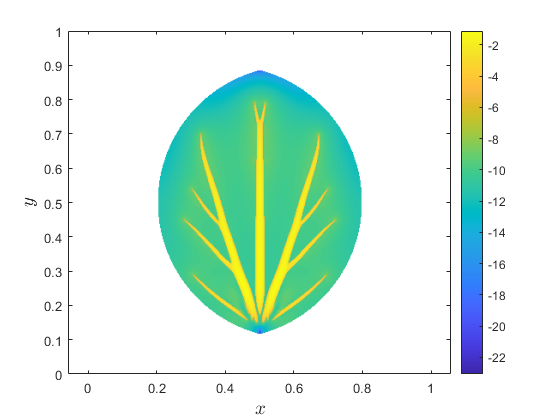}  &  \includegraphics[width=0.2\textwidth]{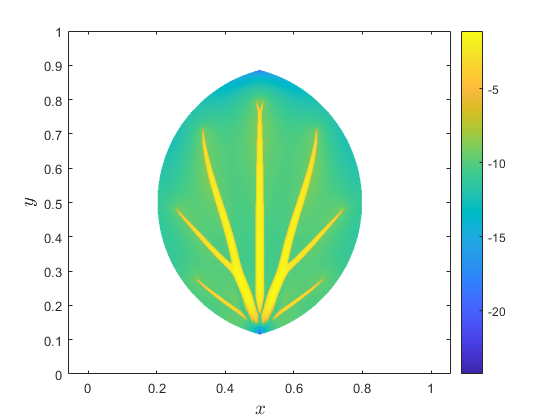} &
 \includegraphics[width=0.2\textwidth]{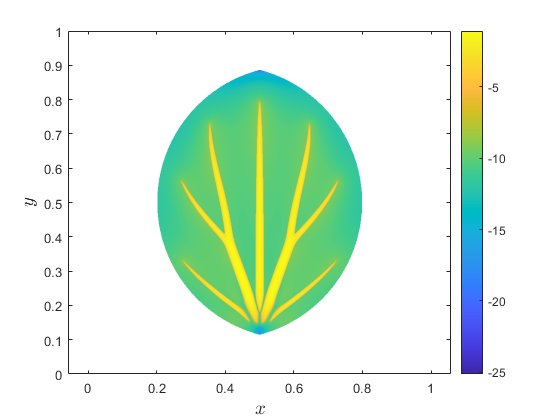} &
 \includegraphics[width=0.2\textwidth]{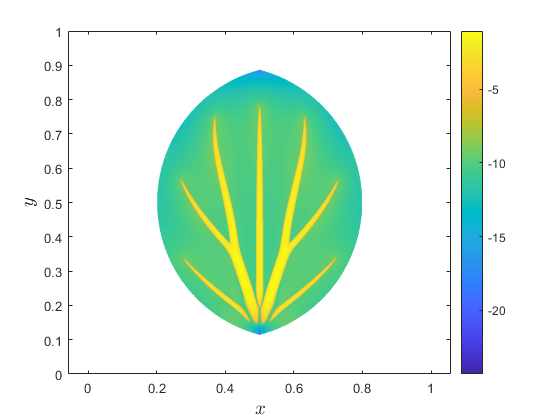}\\ \hline
 \end{tabular}
    \caption{\textit{Configuration of the numerical solution increasing the number $N$ of cells, for leaf-shaped domain. We show the solution $\mathbb C$ in the first line, and its logarithm $\log (\mathbb{C})$ in the second line.}}
    \label{fig:log_r5em3}
\end{figure}

\begin{figure}[h]
    \begin{tabular}{|c|c|c|c|}
    \hline $r$ & $\mathbb C$ & $\log{\mathbb C}$ & $\log{\mathbb C}$, zoom-in \\
\hline
 \begin{turn}{90} $\qquad r = 10^{-3}$ \end{turn}
 & \includegraphics[width=0.3\textwidth]{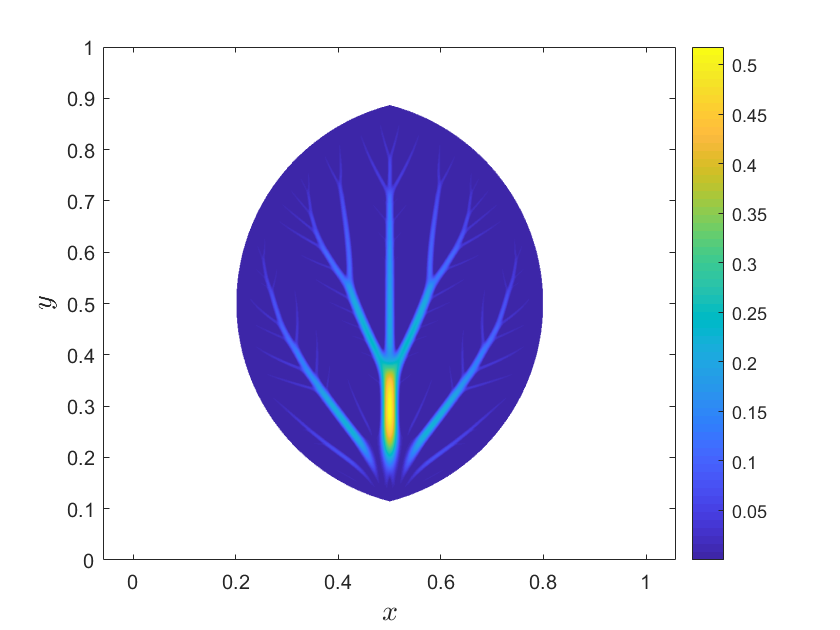} & \includegraphics[width=0.3\textwidth]{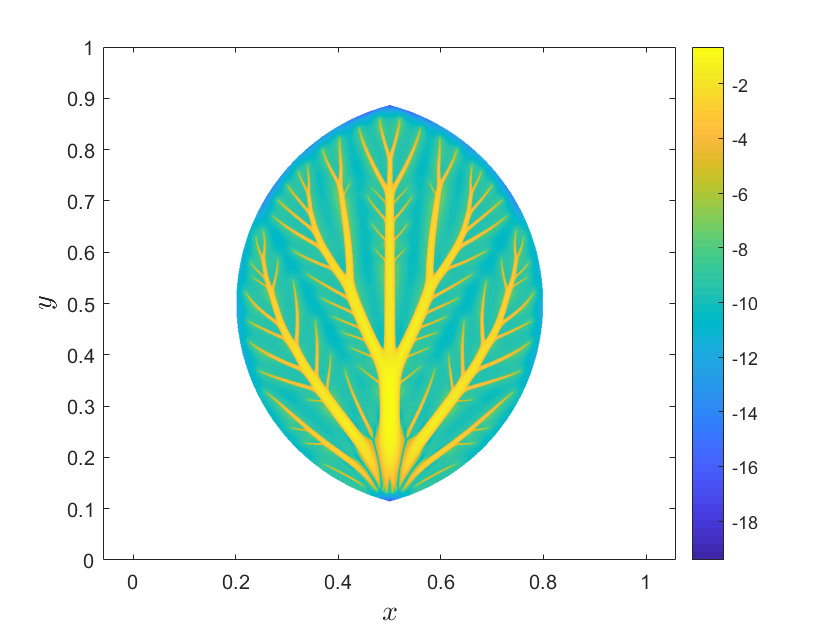}  &  \includegraphics[width=0.3\textwidth]{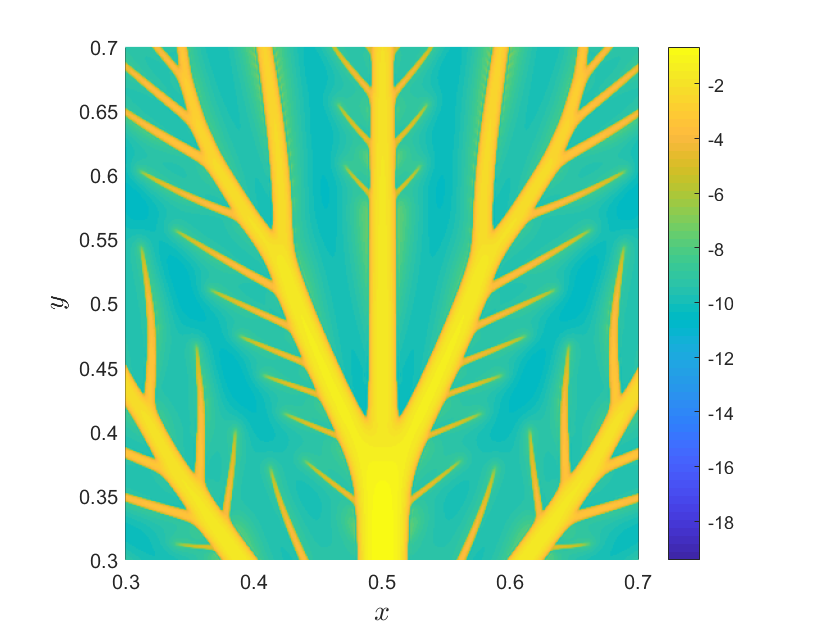}  \\  \hline \begin{turn}{90} $\qquad r = 5\cdot 10^{-4}$ \end{turn}
 & \includegraphics[width=0.3\textwidth]{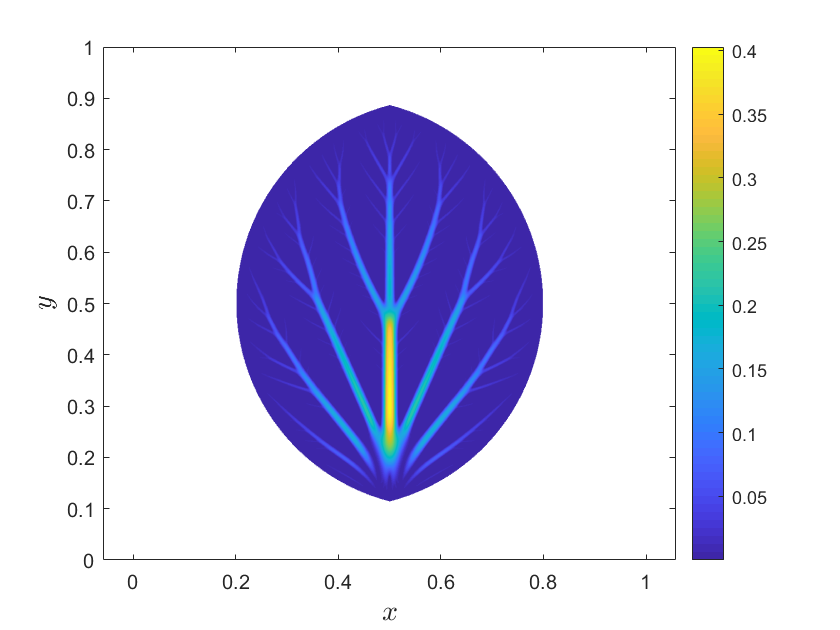} & \includegraphics[width=0.3\textwidth]{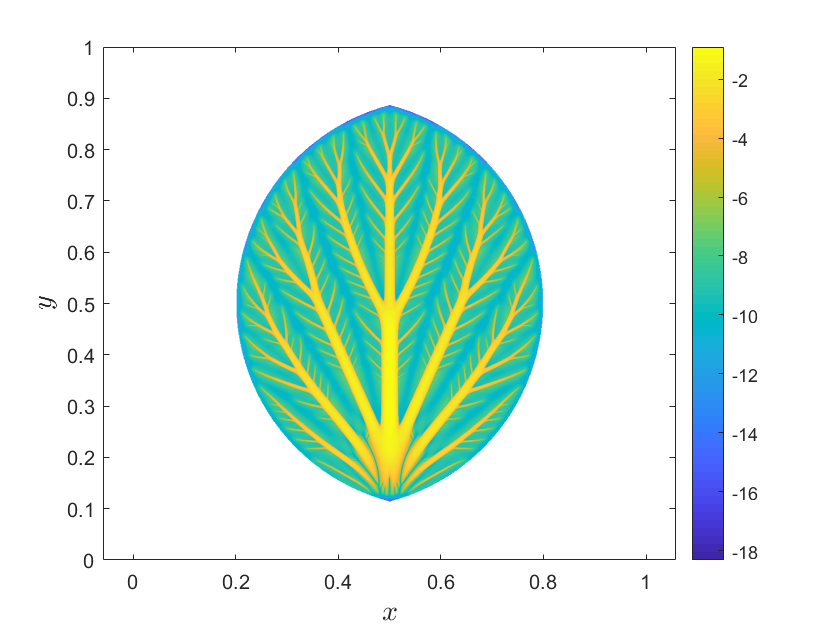}  &  \includegraphics[width=0.3\textwidth]{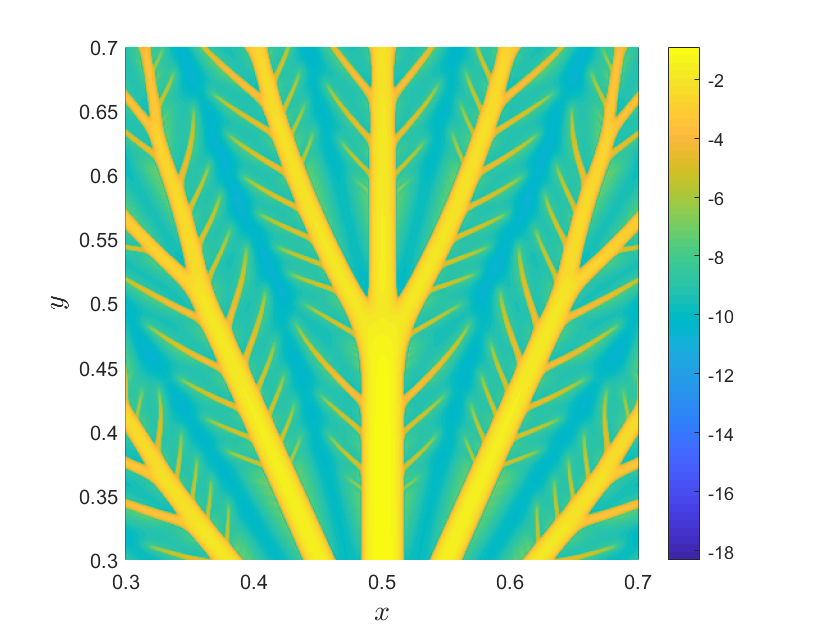}  \\ \hline
 \end{tabular}
    \caption{\textit{Behavior of the solution $\mathbb{C}$ and its logarithm $\log(\mathbb C)$ for smaller values of $r$, illustrating the increase in the number of branches increases as $r \to 0$. The other parameters are in Table \ref{tab:parameters}.}}
    \label{fig:log_r1em3}
\end{figure}

\section{Conclusions}
In this paper, we began by revisiting the $L^2$-gradient flow for a class of self-regulating processes characterized by the minimization of entropy dissipation, coupled with a conservation law for a quantity such as chemical concentration, ions, nutrients, or material pressure, firstly introduced in \cite{portaro2022emergence}. We established a local existence and uniqueness theorem within Hölder spaces, employing Schauder and semigroup theory. The development of a comprehensive Sobolev theory remains an ongoing challenge due to the absence of sufficient regularity estimates and the lack of a minimum principle for the conductivity tensor.

\begin{figure}[h]
    \begin{tabular}{|c|c|c|c|}
    \hline $r$ & $\mathbb C$ & $\log{\mathbb C}$ & contour lines \\
\hline
 \begin{turn}{90} $\qquad r = 10^{-3}$ \end{turn}
 & \includegraphics[width=0.3\textwidth]{Figures_2D/leaf_N1000_r1em3.png} & \includegraphics[width=0.3\textwidth]{Figures_2D/leaf_N1000_r1em3_log.png}  &  \includegraphics[width=0.3\textwidth]{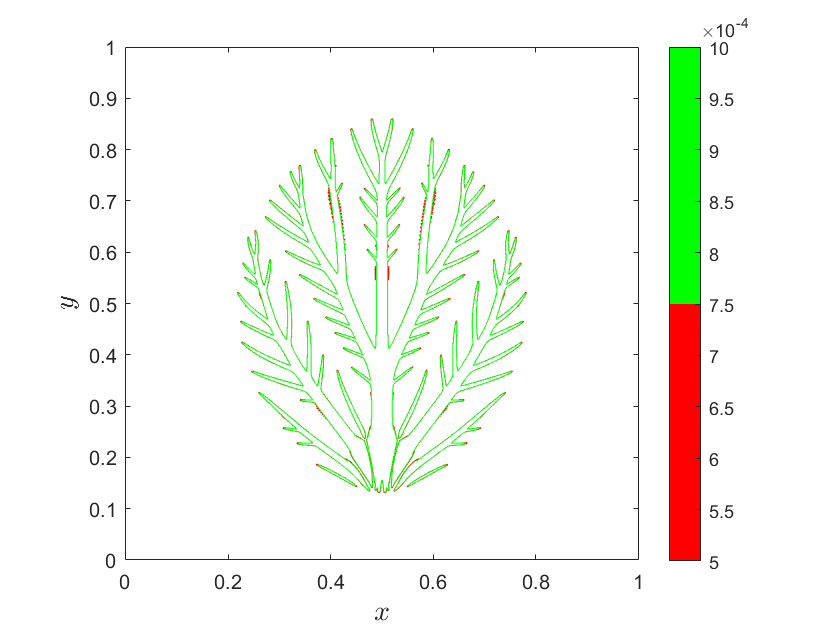}  \\  \hline \begin{turn}{90} $\qquad r = 5\cdot 10^{-4}$ \end{turn}
 & \includegraphics[width=0.3\textwidth]{Figures_2D/leaf_N1000_r5em4.png} & \includegraphics[width=0.3\textwidth]{Figures_2D/leaf_N1000_r5em4_log.png}  &  \includegraphics[width=0.3\textwidth]{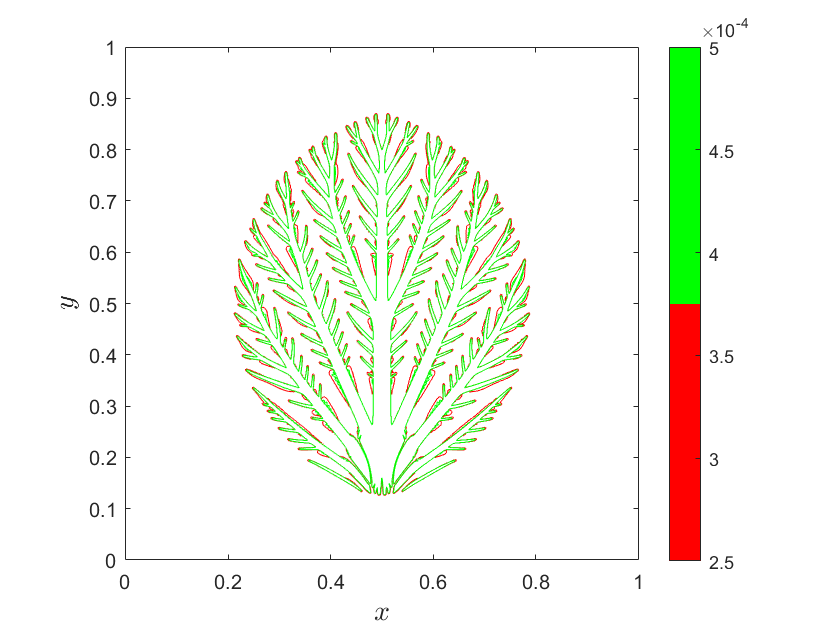}  \\ \hline
 \end{tabular}
    \caption{\textit{The first column displays the solution $\mathbb C$, the second column shows its logarithm $\log(\mathbb C)$, and the third column illustrates the contour lines for $\mathbb C = r$ (green) and $\mathbb{C} = r/2$ (red). In these tests $N = 1000$ and the other parameters are in Table \ref{tab:parameters}.}}
    \label{fig:contour_r}
\end{figure}

\begin{figure}[h]
    \begin{tabular}{|c|c|c|c|}
    \hline $N$ & $\mathbb C$ & $\log{\mathbb C}$ & $\log{\mathbb C}$, zoom-in \\
\hline \begin{turn}{90}$\qquad \qquad 400$\end{turn}
 &
\includegraphics[width=0.3\textwidth]{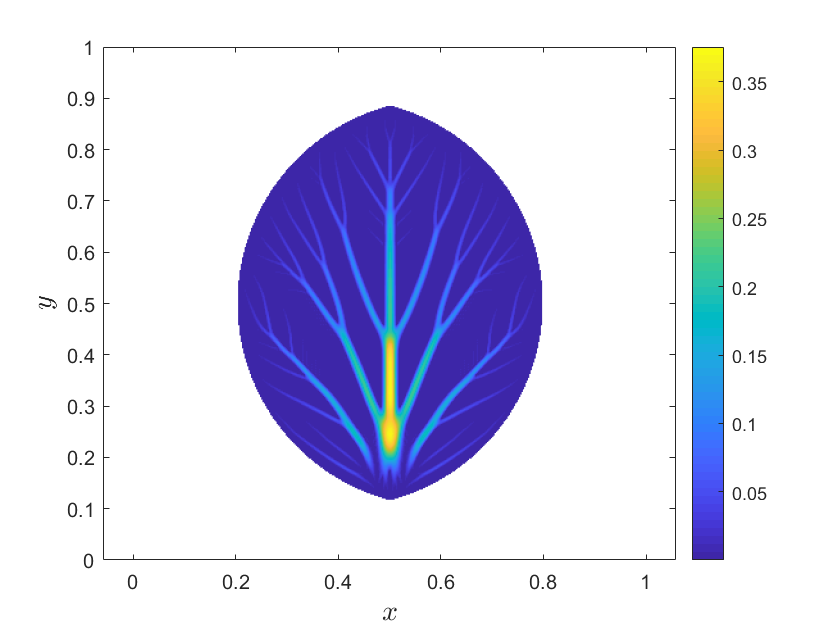}     &  \includegraphics[width=0.3\textwidth]{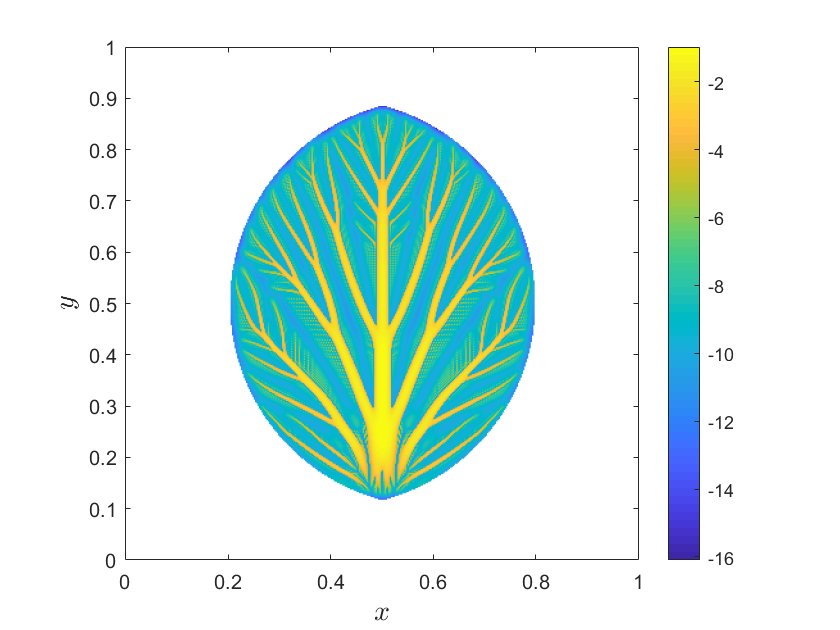} & \includegraphics[width=0.3\textwidth]{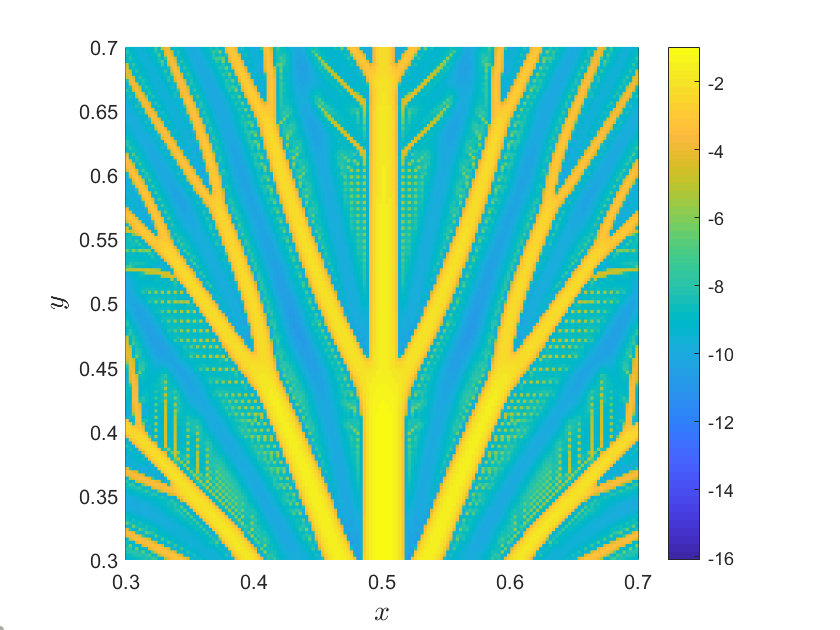}\\ \hline
    \begin{turn}{90} $\qquad \qquad 600$ \end{turn}
 & \includegraphics[width=0.3\textwidth]{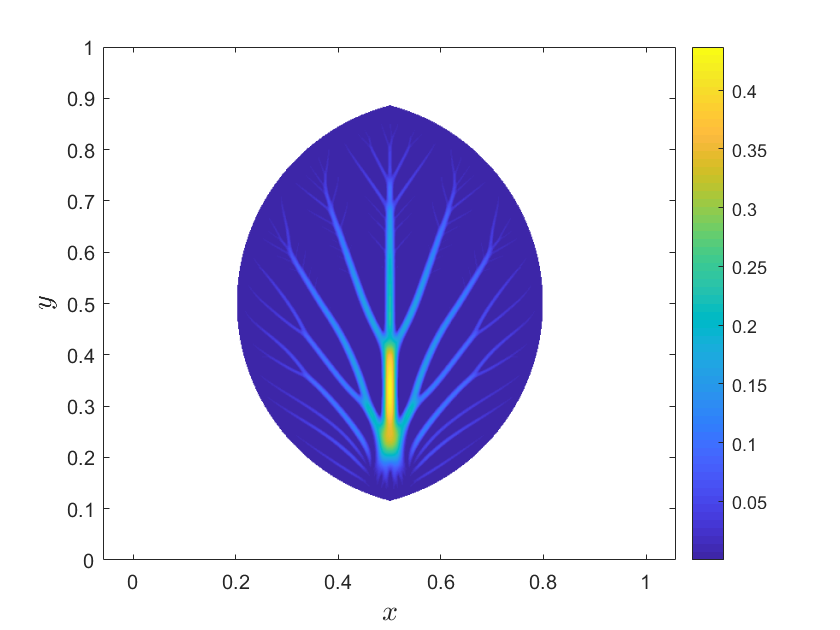} & \includegraphics[width=0.3\textwidth]{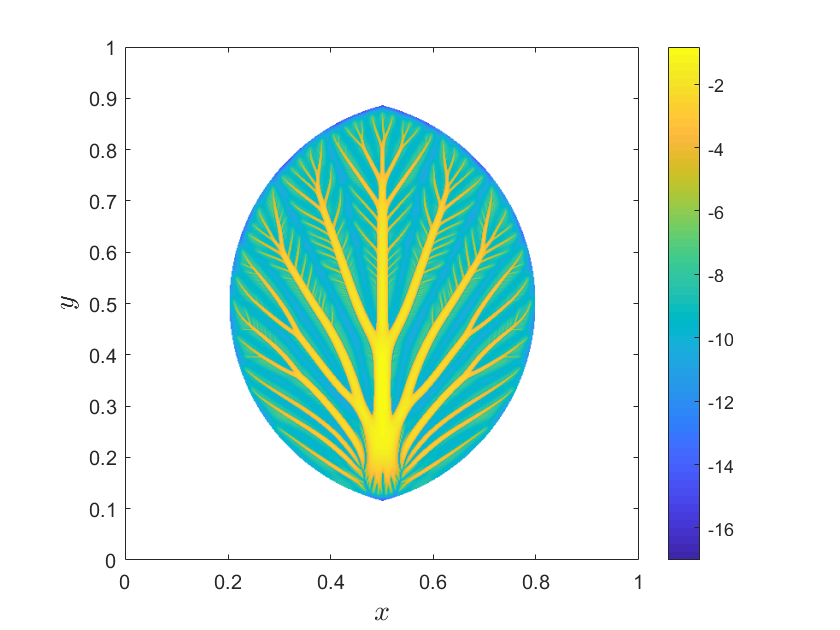}  &  \includegraphics[width=0.3\textwidth]{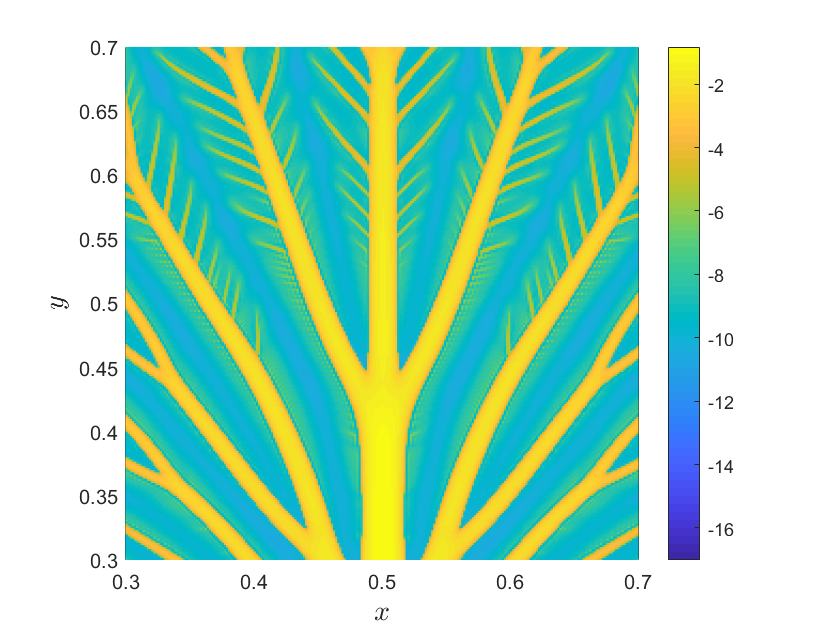}  \\ \hline
 \begin{turn}{90} $\qquad \qquad 800$ \end{turn}
 & \includegraphics[width=0.3\textwidth]{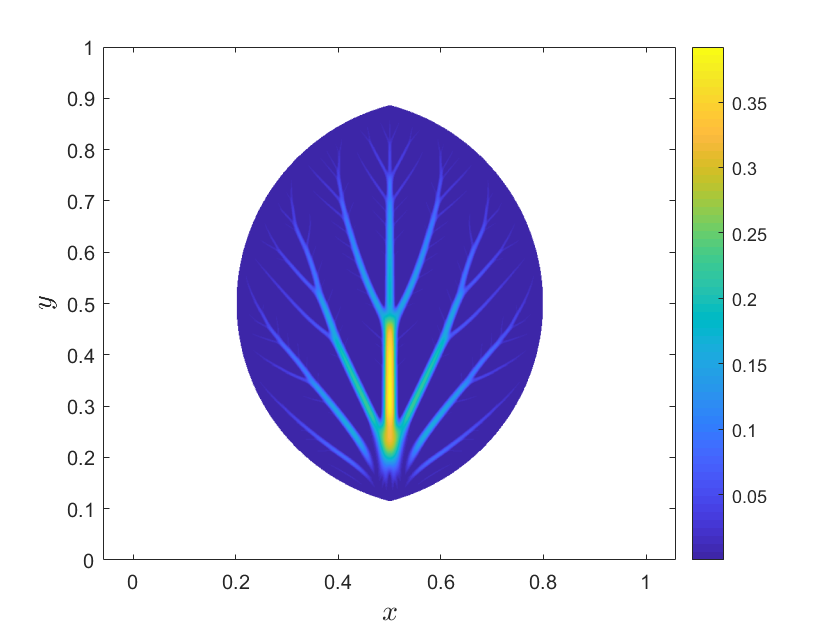} & \includegraphics[width=0.3\textwidth]{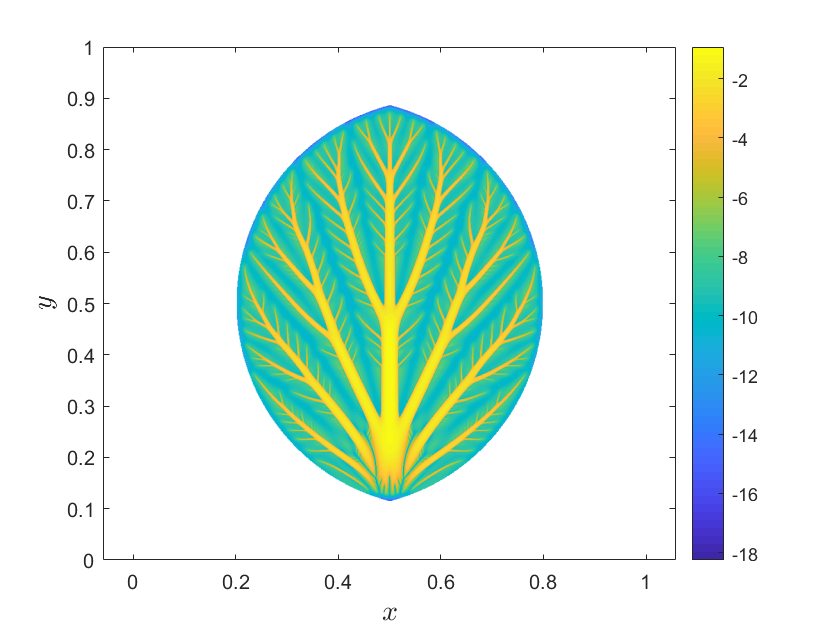}  &  \includegraphics[width=0.3\textwidth]{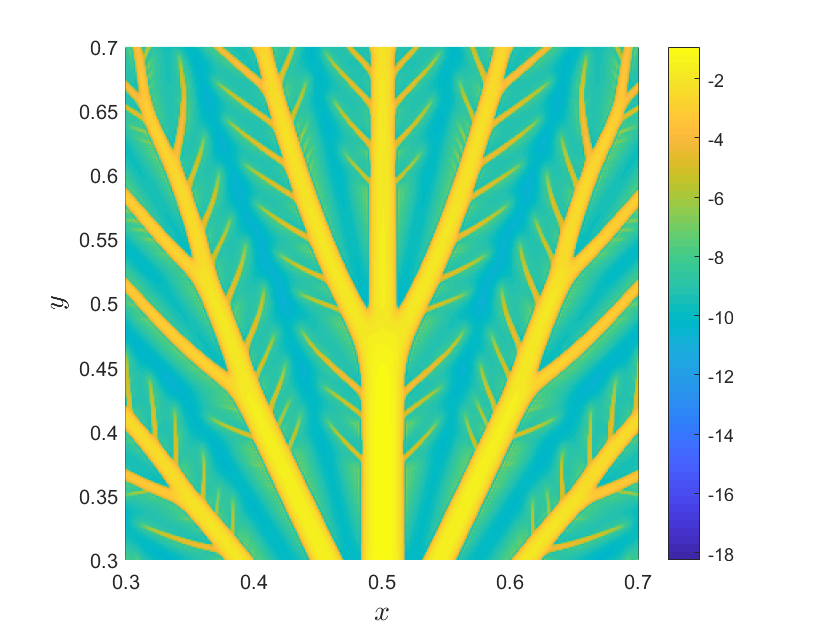}  \\ \hline
  \begin{turn}{90} $\qquad \qquad 1000$ \end{turn}
 & \includegraphics[width=0.3\textwidth]{Figures_2D/leaf_N1000_r5em4.png} & \includegraphics[width=0.3\textwidth]{Figures_2D/leaf_N1000_r5em4_log.png}  &  \includegraphics[width=0.3\textwidth]{Figures_2D/leaf_N1000_r5em4_log_zoom.png}  \\ \hline
    \end{tabular}
    \caption{\textit{Configuration of the solution and the corresponding number of branches for various values of $N$ and $r = 10^{-4}$. The other parameters are in Table \ref{tab:parameters}.}}
    \label{fig:log_r5em4}
\end{figure}

%%%%%%%%%%%%%%%%%%%%%%%%%%%%%%%%%%%%%%%%%%%%%%%%%%%%%%%%%%%%%%%%%%%%%%%%%%%%%%

A key objective of our research was to conduct extensive numerical experiments. We were interested in showing results in different geometries and for different sets of parameters. 
To this purpose we adopted a newly developed nodal ghost method, which is particularly suited for evolutionary PDEs on arbitrary domains in two space dimensions \cite{astuto2024nodal}. 
We began with a circular domain to establish that the numerical solution is barely affected by grid orientation. To further validate this, we conducted tests on a leaf-shaped domain, including showing the effects of a rotated domain.

The main discover of the paper is the following. As $r$ decreases, the solution becomes increasingly intricate, with numerous small branches spreading across a larger area of the computational domain. When $\mathbb C < r$, isotropic permeability takes over, causing the branches to disappear. Moreover, as 
$r$ gets smaller, finer scales emerge in the solution, making it more challenging to achieve accurate results due to the need for higher number of cells to capture these details. A parallel version of the code, which allows unprecedented high resolution computations, is currently in preparation. The new code should allow us to formulate conjectures about the fractal nature of the small branches for vanishingly small values of the isotropic conductivity, for the various models considered in this paper.

\section*{Acknowledgements}
C.A. and G.R. are members of the Gruppo Nazionale Calcolo Scientifico-Istituto Nazionale di Alta Matematica (GNCS-INdAM).

C.A. is supported by the FAIR Spoke 10 Project under the National Recovery and Resilience Plan (PNRR).

\bibliographystyle{plain}
\bibliography{bibliography}

\end{document}